\let\oldsqrt\sqrt
\def\sqrt{\mathpalette\DHLhksqrt}
\def\DHLhksqrt#1#2{%
\setbox0=\hbox{$#1\oldsqrt{#2\,}$}\dimen0=\ht0
\advance\dimen0-0.2\ht0
\setbox2=\hbox{\vrule height\ht0 depth -\dimen0}%
{\box0\lower0.4pt\box2}}
\def\be{\boldsymbol{e}}
\def\b0{\boldsymbol{0}}
 \newcommand{\C}     {\mathbb{C}} 
\newcommand{\R}     {\mathbb{R}} 
\newcommand{\Z}     {\mathbb{Z}} 
\newcommand{\N}     {\mathbb{N}} 
\renewcommand{\P}   {\mathbb{P}}
\newcommand{\Q}     {\mathbb{Q}} 
\newcommand{\T}     {\mathbb{T}}
\newcommand{\Acal}   {{\mathcal A }}
\newcommand{\Ecal}   {{\mathcal E }}
\newcommand{\Scal}   {{\mathcal S }}
\newcommand{\Wcal}   {{\mathcal W }}
\newcommand{\tr}{{\operatorname {Tr}}}
\newcommand{\Exp}{\mathscr{E}\kern-0.2mm{\operatorname{xp}}}
\newcommand{\Log}{\mathscr{L}\kern-0.2mm{\operatorname{og}}}
\def\1{{\mathchoice {1\mskip-4mu\mathrm l}      
{1\mskip-4mu\mathrm l} 
{1\mskip-4.5mu\mathrm l} {1\mskip-5mu\mathrm l}}}
\numberwithin{equation}{section}
\numberwithin{figure}{section}
\newtheoremstyle{plain}
  {6pt}
  {4pt}
  {\slshape}
  {}
  {\bfseries}
  {.}
  {0.5em}
  {}%
\newtheorem{thm}{\protect\theoremname}
  \newtheorem{rem}[thm]{\protect\remarkname}
  \newtheorem{lem}[thm]{\protect\lemmaname}
  \numberwithin{thm}{section}
  \providecommand{\corollaryname}{Corollary}
  \providecommand{\definitionname}{Definition}
  \providecommand{\factname}{Fact}
  \providecommand{\propositionname}{Proposition}
  \providecommand{\remarkname}{Remark}
\providecommand{\theoremname}{Theorem}
\providecommand{\lemmaname}{Lemma}
\begin{document}

\title{Site-monotonicity properties for reflection positive measures with applications to quantum spin systems}
\author{Benjamin Lees\footnote{School of Mathematics, University of Bristol, UK. Email: \textit{benjamin.lees@bristol.ac.uk}} \and Lorenzo Taggi\footnote{Weierstrass Institute for Applied Analysis and Stochastics, Berlin. Email: \textit{lorenzo.taggi@wias-berlin.de}}}
\date{}

\maketitle


\begin{abstract} 
We  consider a general statistical mechanics model on a product of local spaces and prove that, 
 if the corresponding measure is reflection positive, then
several site-monotonicity properties  for the two-point function hold.
As an application of such a general theorem, we derive site-monotonicity properties for the spin-spin correlation of the quantum Heisenberg antiferromagnet and $XY$ model, we prove that such spin-spin correlations are point-wise uniformly positive on vertices with all odd coordinates -- improving previous  positivity results which hold for the Ces\`aro sum -- and we derive site-monotonicity properties for the probability that a loop connects two vertices in various random loop models, including the loop representation of the spin O(N) model, the double-dimer model, the loop O(N) model,  lattice permutations, 
thus extending  the previous results of \textit{Lees and Taggi (2019)}.
\end{abstract}


\section{Introduction}\label{sec:intro}
We consider a general probabilistic model on the torus $\T_L=\Z^d/L\Z^d$,
whose realisations live in a product of local spaces. Each local space is associated to one of the vertices of $\T_L$ and elements of the local spaces interact with each other according to a probability measure.
 Such a general setting includes  various important models in statistical mechanics, for example  the spin O(N) model, the quantum Heisenberg anti-ferromagnet and $XY$ model, the dimer and the double-dimer model, lattice permutations, and the loop O(N) model. 
We prove that, if a linear functional acting on functions of our state space is \emph{reflection positive}, then several site-monotonicity properties for the two-point function hold. This generalises the monotonicity and positivity results of \cite{L-T} to a very general system.
This general result has the following implications.

Firstly, in their seminal paper  \cite{F-S-S}, Fr\"ohlich, Simon and Spencer introduced a method for proving the non-decay of correlations of the two-point function of several statistical mechanics models in dimension $d > 2$. 
This method was further developed in   \cite{F-I-L-S} and used in many other research works  (we additionally refer to \cite{B} for an overview).
More precisely, this method is used to prove that  the Ces\`aro sum of the two-point function is uniformly positive.
 Our general monotonicity result shows that,
when this method works, a stronger result can often be obtained. Namely not only is
the Ces\`aro sum of the two-point function uniformly positive in the system size, but the two-point function is also uniformly positive \emph{point-wise} for a positive fraction of vertices.
This result was derived by Lees and Taggi in \cite{L-T} in a special case  and here it is generalised to an abstract statistical mechanics setting.

As an example of a new application we consider quantum spin systems including the Heisenberg antiferromagnet and XY model, which were not covered by the framework of \cite{L-T}. Quantum spin systems are important class of statistical mechanics models whose realisation space is the tensor product of local Hilbert spaces and can be `represented' as systems of random interacting loops,  we refer to  \cite{U2} for an overview. It is already known \cite{D-L-S, F-I-L-S, K-L-S,K-L-S2} that the Gibbs states of this model are reflection positive in the presence of anti-ferromagnetic interactions and that, in dimension $d > 2$,  the Ces\`aro sum of the two-point function is uniformly positive  for large enough values of the inverse temperature parameter and system size. Our result implies that the spin-spin correlation is point-wise uniformly positive for vertices with all odd coordinates, extending the existing results. We fully expect that this uniform positivity should extend to all vertices, not just `odd' vertices.

Our third main result involves a general class of random loop soup models, which we refer to as the random path model. This class includes the loop representation of the  spin O(N) model \cite{ B-U, L-T}, the double-dimer model \cite{Kenyon}, lattice permutations \cite{B-T, T}, and the loop O(N) model \cite{P-S}.  In \cite{L-T}, site-monotonicity properties for the two-point function -- which is defined as the ratio of partition functions with a walk connecting two-points in a system of loops and the partition function with only loops -- were derived.  Here we extend the result to a general class of two-point functions, including the probability that two fixed vertices have a loop passing through both of them.




\section{Model and main result}\label{sec:model}

Consider the torus $\T_L=\Z^d/L\Z^d$ with $d\geq 2$ and $L\in 2\N$.
Denote by $o=(0,\dots,0)$ the origin of the torus. For each $x\in \T_L$ let $\Sigma_x$ be a Polish space of local states (for example $\mathbb{S}^{N-1}$, $\C^{2S+1}$, $\{-1,+1\}$,...). Further let $\otimes$ be some associative product between the $\Sigma_x$'s (for example the cartesian product or the tensor product). Our state space is
\begin{equation}
\Scal=\otimes_{x\in \T_L} \Sigma_x.
\end{equation}
We denote elements of $\Scal$ by $w=(w_x)_{x\in\T_L}$ where $w_x\in \Sigma_x$.
Let $\Acal_L$ be a real, finite dimensional, algebra of functions on $\Scal$ with unit (for example if $\Sigma_x=\mathbb{S}^{N-1}$ then we could take the cartesian product and $\Acal_L$ to be the algebra of functions $\Scal\to \R$ that are measurable with respect to the Haar measure on $\Scal$). Further, let $\langle\cdot\rangle$ be a linear functional on $\Acal_L$ such $\langle 1\rangle=1$. Our key requirement is that $\langle\cdot\rangle$ is \emph{reflection positive}, which we describe briefly.

\subsection{Reflection Positivity}
Consider a plane $R=\{z\in \R^d\, :\, z\cdot \be_i =m\}$ for some $m\in \tfrac12\Z\cap [0,L)$ and some $i\in\{1,\dots, d\}$. Let $\vartheta:\T_L\to\T_L$ be the reflection operator that reflects vertices of $\T_L$ in the plane $R$. More precisely, for any $x=(x_1,\dots,x_d)\in \T_L$
\begin{equation}
\vartheta(x)_k:=\begin{cases} x_ k & \text{if }k\neq i, \\ 2m-x_k\mod L & \text{if }k=i. \end{cases}
\end{equation}
If $m\in \tfrac12\Z\setminus \Z$ we call such a reflection a \emph{reflection through edges}, if $m\in\Z$ we call such a reflection a \emph{reflection through vertices}. We denote by $\T_L^+,\T_L^-$ the partition of $\T_L$ into two halves with the property that $\vartheta(\T_L^{\pm})=\T_L^{\mp}$.

We say a function $A\in\Acal_L$ has domain $D\subset \T_L$ if for any $w_1,w_2\in \Scal$ that agree on $D$ we have $A(w_1)=A(w_2)$. 
Consider the algebras $\Acal_L^+,\Acal_L^-\subset\Acal_L$, of functions with domain $\T_L^+,\T_L^-$ respectively. The reflection $\vartheta$ acts on elements $w\in \Scal$ as $(\vartheta w)_x=w_{\vartheta x}$ and for $A\in\Acal^+_L$ it acts as $\vartheta A(w)=A(\vartheta w)$.

We say that $\langle\cdot\rangle$ is \emph{reflection positive} with respect to $\vartheta$ if, for any $A,B\in\Acal^+_L$, 
\begin{enumerate}
\item $\langle A\vartheta B\rangle=\langle B\vartheta A\rangle $,
\item $\langle A\vartheta A\rangle\geq 0$.
\end{enumerate}
A consequence of this is the Cauchy-Schwarz inequality
\begin{equation}\label{eq:refpos}
\langle A\vartheta B\rangle^2\leq \langle A\vartheta A\rangle\langle B\vartheta B\rangle.
\end{equation}
We say $\langle\cdot\rangle$ is \emph{reflection positive for reflections through edges resp. vertices} if, for any reflection $\vartheta$ through edges resp. vertices, $\langle\cdot\rangle$ is reflection positive with respect to $\vartheta$. 

\subsection{Main results}
For $j\in\{1,2\}$ let $F^j_o\in\Acal_L$ be functions with domain $\{o\}$.
Fix an arbitrary site $x \in \T_L$ and let  $o=t_0$, $t_1$, $\ldots$, $t_k = x$
be a self-avoiding nearest-neighbour path from $o$ to $t$,
and for any $i \in \{1, \ldots, k\}$, let $\Theta_i$ be the reflection with respect to the plane going through the edge $\{  t_{i-1}, t_{i}  \}$.
Define
$$
(F^j_o)^{[x]} : = \Theta_k  \circ \Theta_{k-1} \, \ldots \, \circ \Theta_1 \, ( F^j_o ).
$$
Observe that the function $(F^j_o)^{[x]}$ does not depend on the chosen path (See  Figure \ref{Fig:refexample} for an illustration).
For a lighter notation denote by  $F^j_x=(F^j_o)^{[x]}$  the function obtained from $F^j_o$ by applying a sequence of reflections that send $o$ to $x$. 
\begin{figure}
\includegraphics[scale=0.26]{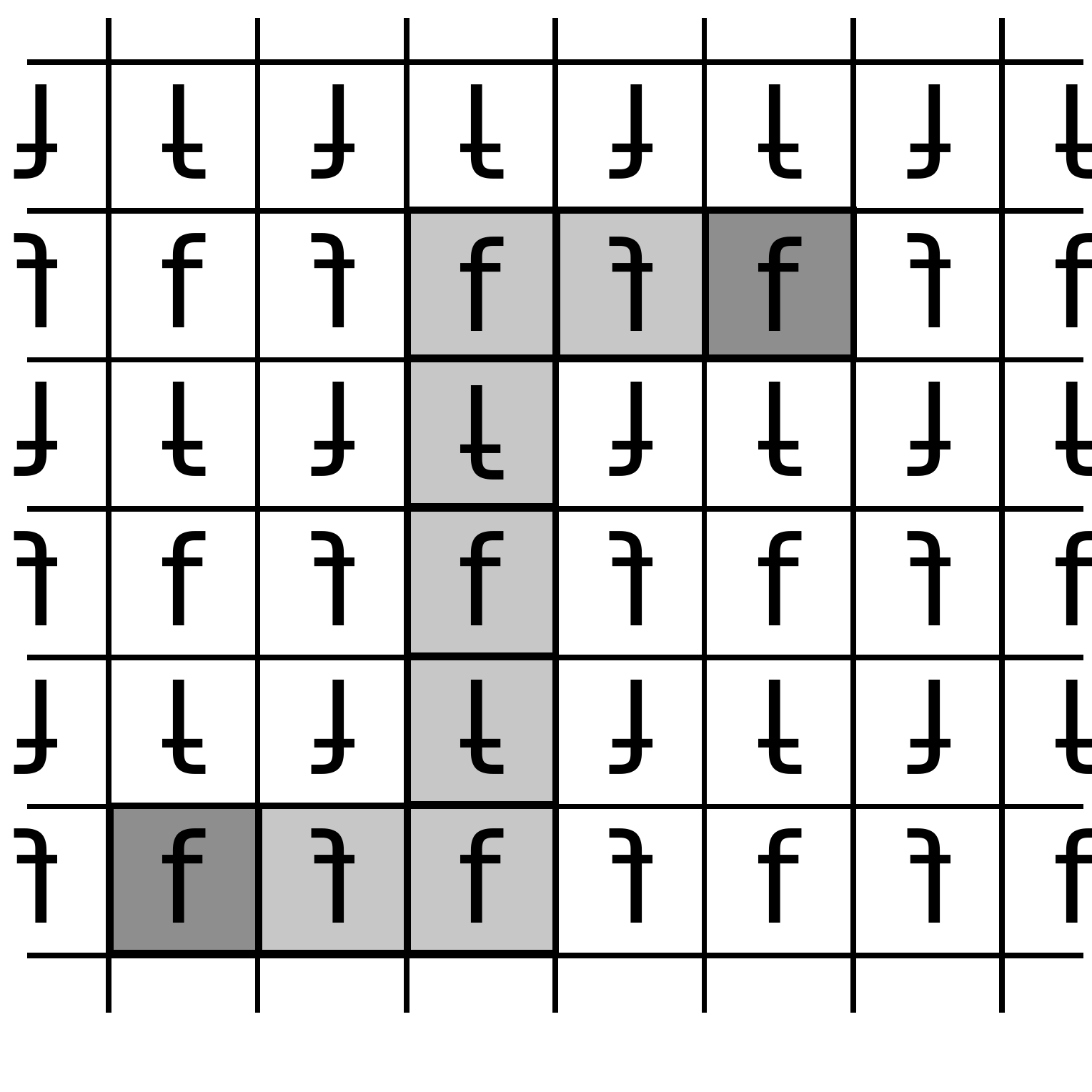}
\centering
\caption{An example of a sequence of reflections sending a function with domain $o$ to a function with domain $x$.
}
\label{Fig:refexample}
\end{figure}
We define the \textit{two-point function},
$$
G_L(x,y) : =  \Big \langle \, F^2_x  \, \, F^2_{y}    \, \, \big ( \prod_{z \in \T_L \setminus \{ x,y\} }F^1_z \big )   \Big \rangle,
$$
omitting the dependence on the functions $F_o^j$ in the notation. For spin system examples we would usually take $F^1_o$ to be the spin at $o$ and $F^2_o=1$, meaning that $G_L(x,y)$ is a spin-spin correlation. We say that the two-point function  is \textit{torus symmetric} if, 
for any $A,B\subset \T_L$ and $z\in \T_L$
\begin{equation}\label{eq:refinvariance}
\big\langle \prod_{x\in A}F^1_x \prod_{x\in B} F^2_x\big\rangle=\big\langle \prod_{x\in A+z}F^1_x \prod_{x\in B+z} F^2_x\big\rangle,
\end{equation}
where the sum is with respect to the torus metric.
As a consequence, for any $x, y, z \in \T_L$, 
\begin{equation}\label{eq:refinvariance2}
G_L(x,y) =  G_L(x + z, y + z), \quad \quad G_L(o,x) =  G_L(-x, o).
\end{equation}
Our first theorem states several site-monotinicity properties for the two-point function. 
\begin{thm}\label{thm:monotonicity}
Consider the torus $\T_L=\Z^d/L\Z^d$ for $d\geq 2$ and $L\in2\N$.
Take $i\in\{1,\dots,d\}$. Suppose that $\langle\cdot\rangle$ is reflection positive for reflections through edges and that the two-point function is torus symmetric. For any $z=(z_1,\dots,z_d)$,
\begin{align}
 G_L(o,z)   &\leq   G_L(o,  z_i \boldsymbol{e}_i)   & \mbox{ if $z_i$ odd} 
 \label{eq:oddinequality}
\\
G_L(o,z)  & \leq \frac{1}{2} \Big ( G_L \big (o, \boldsymbol{e}_i (z_i - 1) \Big ) 
+  G_L \big (o, \boldsymbol{e}_i (z_i + 1) \big )  \Big ) & \mbox{ if $z_i$ even}  \label{eq:eveninequality}
\end{align}
Further, for $y\in\T_L$ such that $y\cdot\be_i=0$ (possibly $y=o$) the function
\begin{equation}\label{eq:oddmonotonicity}
 G_L \big (o,  y + n \boldsymbol{e}_i \big )  +  G_L \big (o,  
n \boldsymbol{e}_i \big )
\end{equation}
is a non-increasing function of $n\in(0,L/2)\cap 2\N+1$.
If, in addition, $\langle\cdot\rangle$ is reflection positive for reflections through vertices then $(\ref{eq:oddinequality})$ also holds for $z_i$ even and (\ref{eq:oddmonotonicity}) holds for any $n\in(0,L/2]$.
\end{thm}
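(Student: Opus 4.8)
The plan is to reduce all four assertions to one polarised Cauchy--Schwarz consequence of reflection positivity, applied to well-chosen reflecting planes and test functions. The starting point is the ``relocation'' observation that $F^j_x=(F^j_o)^{[x]}$ is simply $F^j_o$ transported to $x$: if $\widetilde F{}^j$ is the function on the common local space with $F^j_o(w)=\widetilde F{}^j(w_o)$, an induction along the defining path gives $\Theta_k\circ\cdots\circ\Theta_1(o)=x$, whence $F^j_x(w)=\widetilde F{}^j(w_x)$; this is path-independent, has domain $\{x\}$, and satisfies $\vartheta F^j_x=F^j_{\vartheta x}$ for \emph{every} admissible reflection $\vartheta$. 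For $a\in\T_L^+$ set $B_a:=F^2_a\prod_{z\in\T_L^+\setminus\{a\}}F^1_z\in\Acal_L^+$, so that $\langle B_a\vartheta B_b\rangle=G_L(a,\vartheta b)$ for $a,b\in\T_L^+$. The first axiom gives $\langle B_a\vartheta B_b\rangle=\langle B_b\vartheta B_a\rangle$, i.e.\ $G_L(a,\vartheta b)=G_L(b,\vartheta a)$, and combined with torus symmetry, the symmetry of $G_L$ in its arguments, and translation invariance this yields $G_L(o,x)=G_L(o,-x)$ and, for $y\cdot\be_i=0$ and $n$ odd (for all $n$ if vertex reflection positivity holds), $G_L(o,y+n\be_i)=G_L(o,n\be_i-y)$. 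The second axiom, applied to $A-A'$ with $A=\sum_kB_{a_k}$ and $A'=\sum_lB_{b_l}$ (legitimate, as all base points lie in $\T_L^+$), together with the first axiom yields the \emph{master inequality}
\[
\sum_{k,l}G_L(a_k,\vartheta b_l)\ \le\ \tfrac12\Big(\sum_{k,k'}G_L(a_k,\vartheta a_{k'})+\sum_{l,l'}G_L(b_l,\vartheta b_{l'})\Big),
\]
whose single-term case is $G_L(a,\vartheta b)^2\le G_L(a,\vartheta a)\,G_L(b,\vartheta b)$, and from which $G_L(o,\vartheta o)=\langle B_o\vartheta B_o\rangle\ge 0$.

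For \eqref{eq:oddinequality} with $z_i$ odd, take $\vartheta$ the reflection in $\{z\cdot\be_i=z_i/2\}$, an edge reflection (a vertex reflection when $z_i$ is even, whence the extra hypothesis there). Then $\vartheta o=z_i\be_i$, $\vartheta z=z-z_i\be_i$, the points $o,z$ lie in opposite halves, and the single-term bound plus translation invariance gives $G_L(o,z)^2\le G_L(o,z_i\be_i)^2$, so that (by $G_L(o,z_i\be_i)\ge0$) one may take square roots. For \eqref{eq:eveninequality} with $z_i$ even, $z_i\ge2$, use instead the edge reflection in $\{z\cdot\be_i=(z_i+1)/2\}$: here $\vartheta o=(z_i+1)\be_i$ and $\vartheta z=z-(z_i-1)\be_i$, so $G_L(o,z)^2\le G_L(o,(z_i+1)\be_i)\,G_L(o,(z_i-1)\be_i)$, and AM--GM turns the geometric mean of these two nonnegative numbers into the arithmetic mean in \eqref{eq:eveninequality}. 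The case $z_i=0$ is reduced to a nonzero coordinate of $z$.

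For \eqref{eq:oddmonotonicity}, fix $y$ with $y\cdot\be_i=0$, put $S(n):=G_L(o,y+n\be_i)+G_L(o,n\be_i)$, and let $\vartheta$ be the reflection in $\{z\cdot\be_i=n/2\}$ ($n$ odd, so an edge reflection). Apply the master inequality with $A=B_o+B_y$ and $A'=B_{-2\be_i}+B_{y-2\be_i}$. Since $\vartheta o=n\be_i$, $\vartheta y=y+n\be_i$, $\vartheta(-2\be_i)=(n+2)\be_i$, $\vartheta(y-2\be_i)=y+(n+2)\be_i$, translation invariance and the $y\mapsto-y$ symmetry at the odd heights $n,n+2,n+4$ collapse the two diagonal sums to $2S(n)$ and $2S(n+4)$ and the cross sum to $2S(n+2)$, so the master inequality reads $S(n+2)\le\tfrac12(S(n)+S(n+4))$. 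Hence $S$ is midpoint-convex along the odd progression, and since $S(n)=S(L-n)$ (torus symmetry and the $y\mapsto-y$ symmetry, with $L-n$ again odd), $S$ is convex and symmetric about $L/2$, hence non-increasing on the first half: this is \eqref{eq:oddmonotonicity}. Under the additional vertex-reflection hypothesis the $y\mapsto-y$ symmetry holds at all heights, so taking $A'=B_{-\be_i}+B_{y-\be_i}$ gives $S(n+1)\le\tfrac12(S(n)+S(n+2))$ for every $n$, hence monotonicity on all of $(0,L/2]$; the same upgrade gives \eqref{eq:oddinequality} for even $z_i$.

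I expect the two genuinely delicate points to be: (i) the relocation statement --- path-independence of $(F^j_o)^{[x]}$ and its correct intertwining with all admissible reflections, which is exactly what makes the algebra of $G_L$ compatible with reflection positivity; and (ii) in the monotonicity step, arranging the reflecting plane together with the \emph{summed} test functions so that the polarised inequality collapses precisely onto a convexity relation for $S$ rather than for a larger combination of two-point functions, while tracking which half of the torus each base point occupies and using that an edge reflection can only send $o$ to a point of odd $\be_i$-coordinate --- the latter being exactly why, without vertex reflections, the conclusions are confined to odd $n$ (resp.\ odd $z_i$). The remaining steps are routine manipulations with translation invariance and AM--GM.
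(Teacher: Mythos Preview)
Your argument is correct and structurally very close to the paper's. Both proofs reduce everything to the positive--semidefiniteness of the Schwarz form $(A,B)\mapsto\langle A\,\vartheta B\rangle$ evaluated on the local observables $B_a=F^2_a\prod_{z\in\T_L^+\setminus\{a\}}F^1_z$, choose the same reflecting planes, and end with the same discrete convexity for $S(n)=2G_L^{\be_i}(y+n\be_i)$. The differences are organisational rather than conceptual:
\begin{itemize}
\item The paper packages the reflection step into a single lemma (Lemma~4.1): for any $Q\subset\T_L$, $\sum_{x\neq y\in Q}G_L(x,y)\le\tfrac12\sum_{x\neq y\in Q^+}G_L(x,y)+\tfrac12\sum_{x\neq y\in Q^-}G_L(x,y)$, and proves it by an $\eta$-expansion of $A=\prod_{x\in Q\cap\T_L^+}(1+\eta B_x)$ inside Cauchy--Schwarz. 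Your ``master inequality'' obtained from $\langle(A-A')\vartheta(A-A')\rangle\ge0$ with $A=\sum B_{a_k}$ is the same statement reached without the $\eta$ device; this is a little more direct.
\item For \eqref{eq:eveninequality} you pass through Cauchy--Schwarz (product form) and then AM--GM, while the paper's Lemma~4.1 with $Q=\{o,z\}$ already lands on the arithmetic mean; your intermediate geometric-mean bound is marginally sharper but not used later.
\item For \eqref{eq:oddmonotonicity} you phrase the endgame as ``midpoint convexity $+$ symmetry $\Rightarrow$ monotone on the first half''; the paper iterates the convexity inequality to a contradiction. These are equivalent.
\end{itemize}
One point you flagged as delicate is exactly right: the identity $G_L(o,y+n\be_i)=G_L(o,n\be_i-y)$ for $y\cdot\be_i=0$ is \emph{not} a consequence of torus symmetry alone---it genuinely uses the first reflection-positivity axiom $\langle B_o\,\vartheta B_y\rangle=\langle B_y\,\vartheta B_o\rangle$ at the plane $\{x_i=n/2\}$, and this is why the parity restriction appears without vertex reflections. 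The paper uses the same symmetry implicitly when simplifying the four-point lemma.
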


Our next theorem is a consequence of Theorem \ref{thm:monotonicity} and consists of the following statements. 
 Suppose that the two-point function is uniformly bounded from above by a constant $M$,
(i) Whenever the Ces\`aro sum of the two-point function is uniformly positive, the two-point function is\textit{ point-wise} uniformly positive on cartesian axes. (ii) - (iii) If the uniformly positive lower bound to the Ces\`aro sum is close enough to $M$, then the two-point function is point-wise uniformly positive not only on the cartesian axes, but also at any  site in a box centred at the origin whose side length is of order $O(L)$.

\begin{thm}\label{thm:positivity}
Consider the torus $\T_L=\T^d/L\Z^d$ for $d\geq 2$ and $L\in2\N$.
Take $i\in\{1,\dots,d\}$. Suppose that $\langle\cdot\rangle$ is reflection positive for reflections through edges and that the two-point function is torus symmetric. Moreover, suppose that for some $C_1>0$ we have
\begin{equation}\label{eq:FSS}
\liminf_{\substack{L\to\infty\\ L\text{ even}}}\frac{1}{|\T_L|}\sum_{x\in\T_L}
G_L(o,x) \, \geq  \, C_1>0,
\end{equation}
and that  for  some $M\in(0,\infty)$ we have that,
\begin{equation}\label{eq:corrrequirement}
\forall L \in 2 \mathbb{N} \quad \forall x, y \in \T_L \quad G_L(x,y) \leq M.
\end{equation}
Then, the following properties hold,
\begin{enumerate}[(i)]
\item For any $\varphi \in (0, \frac{C_1}{2})$ there exists  $\varepsilon > 0$  such that for any  integer $n \in (- \varepsilon \, L, \varepsilon L )$ and  any $i\in\{1,\dots,d\}$, 
$$
G_L(o, \boldsymbol{e}_i n ) \geq  \varphi. $$
\item For $\varepsilon\in(0,\tfrac12)$ and $L \in 2 \mathbb{N}$ sufficiently large, for any $x\in\T_L$ such that $|x\cdot\be_i|\in(0,\varepsilon L)\cap (2\N+1)$ for every $i\in\{1,\dots,d\}$,
$$
G_L(o,x) \geq M-\big(\tfrac14-\tfrac12\varepsilon\big)^{-d}(M-C_1).
$$
\item If $\langle\cdot\rangle$ is also reflection positive for reflections through vertices then 
 for any $\varepsilon\in(0,\tfrac12)$ and $L \in 2 \mathbb{N}$ sufficiently large, 
for all $x\in\T_L$ such that $|x\cdot\be_i| \in (0, \varepsilon L)$ for every $i\in\{1,\dots,d\}$,
$$
G_L(o,x)\geq M-\big(\tfrac12-\varepsilon\big)^{-d}(M-C_1).
$$
\end{enumerate}
\end{thm}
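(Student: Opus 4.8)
The plan is to derive Theorem~\ref{thm:positivity} entirely from the site-monotonicity statements of Theorem~\ref{thm:monotonicity}, treating the Ces\`aro-sum hypothesis \eqref{eq:FSS} and the uniform upper bound \eqref{eq:corrrequirement} as black boxes. For part (i), I would argue by contradiction: suppose that for some $\varphi < C_1/2$ there is no admissible $\varepsilon$, so that along a subsequence of even $L$ there is an integer $n_L$ with $|n_L| < \varepsilon_L L$, $\varepsilon_L \to 0$, and $G_L(o, \be_i n_L) < \varphi$. The monotonicity of the quantity in \eqref{eq:oddmonotonicity} with $y = o$ (using reflection positivity through edges, so $n$ ranges over odd integers in $(0,L/2)$; the case of general parity and $y=o$ is handled by \eqref{eq:oddinequality} and \eqref{eq:eveninequality} reducing everything to the axis) shows that $n \mapsto G_L(o, \be_i n)$ is essentially non-increasing in $|n|$ along the axis up to the even/odd averaging. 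Hence if it drops below $\varphi$ at a point of order $o(L)$, it stays below roughly $\varphi$ (up to the factor-$2$ losses in \eqref{eq:eveninequality}) on a fraction $1-o(1)$ of the axis, and then by torus symmetry \eqref{eq:refinvariance2} and a further application of the monotonicity/averaging inequalities transported to lines parallel to $\be_i$, on a fraction $1-o(1)$ of the whole torus. Averaging gives $\frac{1}{|\T_L|}\sum_x G_L(o,x) \le \varphi + o(1) + (\text{small correction}) < C_1$, contradicting \eqref{eq:FSS}. The bookkeeping needed to control the even-coordinate averaging loss and to pass from the axis to the full torus is the main technical content here.

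For parts (ii) and (iii), the idea is a direct quantitative version of the same averaging. Fix $\varepsilon \in (0,\tfrac12)$. Split $\T_L$ (or its relevant half) according to whether a vertex lies in the box $B_{\varepsilon} := \{x : |x\cdot\be_i| < \varepsilon L \ \forall i\}$ or not. On the complement, bound $G_L(o,x) \le M$ trivially; on $B_\varepsilon$, we want to bound $G_L(o,x)$ from below by the target vertex value plus a controlled error. The key is to run \eqref{eq:oddinequality} (valid for all $z_i$ in the vertex-reflection case of (iii), and for odd $z_i$ in case (ii)) coordinate by coordinate: starting from a general $x\in B_\varepsilon$, successively replace $x$ by vertices closer to the axes, picking up no loss in $G_L$ at each step since $G_L(o,z) \le G_L(o, z_i\be_i)$ only goes the wrong way — so instead I would use these inequalities in the form that compares the \emph{average over a box} to the value at a fixed vertex. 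Concretely: the inequality $G_L(o,z) \le G_L(o, z_i \be_i)$ iterated in each of the $d$ coordinates shows that $G_L(o,\cdot)$ restricted to $B_\varepsilon$ is dominated by its one-dimensional axis restrictions, and the axis restriction is (by \eqref{eq:oddmonotonicity}) non-increasing away from $o$. Therefore the average of $G_L(o,x)$ over $B_\varepsilon$ is at least the value at the ``corner'' vertex of $B_\varepsilon$ with all coordinates of size $\asymp \varepsilon L$; and the average over all of $\T_L$ splits as (fraction in $B_\varepsilon$)$\times$(average on $B_\varepsilon$) $+$ (fraction outside)$\times(\le M)$, which is $\le (\text{vol fraction}) \cdot (\text{min on }B_\varepsilon) + (1-\text{vol fraction})M$. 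Since the volume fraction of $B_\varepsilon$ in $\T_L$ is $(\tfrac12 - \varepsilon)^d$ for vertex reflections (it is a box of side $\approx 2\varepsilon L$ inside one of side $L$, but with the symmetry $G_L(o,x)=G_L(-x,o)$ one effectively works on a half in each coordinate, giving the stated $(\tfrac12-\varepsilon)^{-d}$ and $(\tfrac14-\tfrac12\varepsilon)^{-d}$ factors), combining with \eqref{eq:FSS} gives $C_1 \le (\tfrac12-\varepsilon)^d \cdot m + (1-(\tfrac12-\varepsilon)^d)M$ where $m$ is the minimum of $G_L(o,\cdot)$ over $B_\varepsilon$, i.e. $m \ge M - (\tfrac12-\varepsilon)^{-d}(M - C_1)$, which is the claimed bound. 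Part (ii) is the same computation but the factor $\tfrac14 - \tfrac12\varepsilon$ in place of $\tfrac12 - \varepsilon$ accounts for restricting to odd coordinates (roughly half the vertices in each direction are odd), since in the edge-reflection-only case \eqref{eq:oddinequality} is available only for odd $z_i$.

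The step I expect to be the main obstacle is making the reduction ``from a general vertex in the box to a fixed axis/corner vertex'' simultaneously in all $d$ coordinates while keeping the constant sharp — one must apply \eqref{eq:oddinequality} (and, for the even coordinates in case (ii), the averaged inequality \eqref{eq:eveninequality}) in a nested fashion, and then correctly translate the resulting pointwise domination into an inequality between the torus average and the box minimum, tracking exactly which sub-box of which side length carries the ``good'' lower bound. The parity restrictions in case (ii) are the fiddly part: \eqref{eq:eveninequality} only gives an average of two neighbouring odd-coordinate values, so to get a clean pointwise bound at odd vertices one uses \eqref{eq:oddinequality} directly and absorbs the non-odd vertices into the error term, which is why the effective volume fraction degrades from $(\tfrac12-\varepsilon)^d$ to $(\tfrac14-\tfrac12\varepsilon)^d$. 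I would also need to be slightly careful that ``$L$ sufficiently large'' is used only to ensure that $\varepsilon L$, $\tfrac L2$, etc., straddle the integer/parity constraints correctly and that the $o(1)$ errors from boundary rounding of the boxes are negligible; none of this is deep, but it is where a careless constant would slip in.
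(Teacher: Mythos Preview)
Your approach to part~(i) is essentially the contrapositive of the paper's and would go through: the paper first locates a vertex $z_L$ with at least one coordinate $\geq \varepsilon L$ where $G_L(o,z_L)\geq\varphi$ (by an averaging contradiction with \eqref{eq:FSS}), projects to the axis via \eqref{eq:oddinequality}/\eqref{eq:eveninequality} losing at most a factor $2$, and then uses \eqref{eq:oddmonotonicity} to propagate the lower bound inward along the axis. Your version assumes a small axis value near the origin and propagates smallness outward and then off-axis via $G_L(o,z)\le G_L(o,z_i\be_i)$; this is logically equivalent and the bookkeeping is the same.

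For parts~(ii)--(iii), however, there is a genuine gap. You correctly note that $G_L(o,z)\le G_L(o,z_i\be_i)$ ``goes the wrong way'' for a direct pointwise lower bound, but your proposed workaround does not repair this. The chain you write, namely that the torus average is $\le (\text{vol fraction})\cdot m + (1-\text{vol fraction})\cdot M$ with $m=\min_{B_\varepsilon}G_L(o,\cdot)$, is false: the average over $B_\varepsilon$ is at \emph{least} $m$, not at most, so no upper bound on the torus average in terms of $m$ follows and you cannot invoke \eqref{eq:FSS}. Nor does any monotonicity you have give a direct comparison between the torus average and the minimum on a sub-box.

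The idea you are missing is the paper's Lemma~5.1. The point is to combine \eqref{eq:oddinequality} and \eqref{eq:oddmonotonicity} \emph{together} rather than separately: for $z$ farther from the origin than $y$ along the $i$-th axis (both odd in that coordinate),
\[
2\,G_L(o,z)\;\le\; G_L(o,z)+G_L\bigl(o,(z\cdot\be_i)\be_i\bigr)\;\le\;G_L(o,y)+G_L\bigl(o,(y\cdot\be_i)\be_i\bigr)\;\le\;G_L(o,y)+M,
\]
so that $G_L(o,y)\ge 2\,G_L(o,z)-M$. Iterating this over the $d$ coordinates yields $G_L(o,y)\ge 2^d G_L(o,z)-(2^d-1)M$ for every odd $y$ in the box $\Q_z$. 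One then runs your averaging/contradiction argument not on $B_\varepsilon$ but on its \emph{complement}: if every odd vertex $z$ outside the $\varepsilon L$-box had $G_L(o,z)$ below $M-(\tfrac12-\varepsilon)^{-d}(M-C_1)$, the Ces\`aro sum would fall below $C_1$. This produces a single far vertex $z_L$ with a good lower bound, and the iterated inequality then transports that bound to every odd $y$ with $\|y\|_\infty<\varepsilon L$. The factors $2^d$ from the lemma and $(\tfrac12-\varepsilon)^{-d}$ (resp.\ $(1-2\varepsilon)^{-d}$) from the counting combine to give exactly the constants $(\tfrac14-\tfrac12\varepsilon)^{-d}$ and $(\tfrac12-\varepsilon)^{-d}$ in the statement; your attempt to read these off as raw volume fractions is also where your constants go astray.
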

\begin{rem} \label{remark}
\begin{enumerate}[(i)]
\item  For many statistical mechanics models one has that there exists some positive $c > 0$ such that, if $x$ and $y$ are nearest neighbours, then $G_L(o,x) \geq  G_L(o,y) \, \, c$. When such a property is fulfilled, 
the properties of point-wise positivity of the two-point function stated in (i) and (ii) can be extended to vertices which are not necessarily odd.
\item If we do not care about the size of the box around $o$ where we can show that two-point functions are uniformly bounded then we can simple look at the limit $\varepsilon\to 0$. In this case the bound in (ii) becomes $M-4^d(M-C_1)$ and the bound in (iii) becomes $M-2^d(M-C_1)$.
\end{enumerate}
\end{rem}

\section{Applications}\label{sec:examples}

\subsection{Quantum Heisenberg model}\label{sec:quantumspin}

For $S\in \tfrac12\N$ we define $\Sigma_x=\C^{2S+1}$ and $\otimes$ to be the tensor product, hence
$
\Scal=\otimes_{x\in\T_L}\C^{2S+1}$.
 Let $S^1,S^2,S^3$ denote the spin-$S$ operators on $\C^{2S+1}$. They are hermitian matrices defined by \begin{equation}
[S^1,S^2]=iS^3,\qquad [S^2,S^3]=iS^1,\qquad [S^3,S^2]=iS^2,
\end{equation}
\begin{equation}
(S^1)^2+(S^2)^2+(S^3)^2=S(S+1)\1,
\end{equation}
where $\1$ is the identity matrix. Each spin matrix has spectrum $\{-S,-S+1,\dots,S\}$. We denote by $S^i_x=S^i\otimes \1_{\T_L\setminus\{x\}}$ the operator on $\Scal$ that acts as $S^i$ on $\Sigma_x$ and as $\1$ on each $\Sigma_y$, $y\neq x$. For $u\in [-1,1]$ consider the hamiltonian
\begin{equation}
H_u=-2\sum_{\{x,y\}\in\Ecal_L}(S^1_xS^1_y+uS^2_xS^2_y+S^3_xS^3_y).
\end{equation}
The case $u=1$ gives the Heisenberg ferromagnet, $u=-1$ is equivalent to the Heisenberg antiferromagnet, and $u=0$ is the quantum XY model.
For $\beta\geq0$ corresponding to the \emph{inverse temperature} our linear operator is given by the usual Gibbs state at inverse temperature $\beta$. More precisely, for operator $A$ on $(\C^{2S+1})^{\T_L}$ the expectation of $A$ in the Gibbs state is
\begin{equation}
\langle A\rangle =\frac{1}{Z_{u}(\beta)}\tr \,A e^{-\beta H_u}, \qquad Z_u(\beta)=\tr \,e^{-\beta H_u}.
\end{equation}
Take
\begin{equation}
F^1_x=\1_x\quad \text{ and } \quad F^2_x=S^3_x.
\end{equation}
For $u\leq 0$ we have reflection positivity for reflections through edges \cite{F-S-S, K-L-S2,U}. The following theorem is a direct consequence of Theorem \ref{thm:monotonicity}.
\begin{thm}\label{cor:quantummonotonicity}
Let $\beta\geq 0$, $L\in 2\N$, $S\in\tfrac12\N$, $d\geq 2$ and $u\leq 0$. For any $z\in\N\setminus\{0\}$, 
\begin{equation}
\langle S^3_oS^3_z\rangle \leq \begin{cases}\langle S^3_oS^3_{(z\cdot\be_i)\be_i}\rangle & \text{if }z\cdot\be_i\in 2\N+1, \\
\tfrac12 \left( \langle S^3_oS^3_{(z\cdot\be_i+1)\be_i}\rangle + \langle S^3_oS^3_{(z\cdot\be_i-1)\be_i}\rangle \right) & \text{if }z\cdot\be_i\in 2\N\setminus \{o\}. \end{cases}
\end{equation}
Further for $y\in\T_L$ such that $y\cdot\be_i=0$ (for example $y=o$) the function 
\begin{equation}
\langle S^3_oS^3_{y+n\be_i}\rangle + \langle S^3_oS^3_{n\be_i}\rangle,
\end{equation}
is a non-increasing function of $n$ for odd $n\in (0,L/2)$.
\end{thm}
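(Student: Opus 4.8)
The plan is to deduce Theorem~\ref{cor:quantummonotonicity} as an immediate specialisation of Theorem~\ref{thm:monotonicity}, so the work consists mainly of verifying that the abstract hypotheses are met in the quantum Heisenberg/$XY$ setting. First I would fix the data in the abstract framework: take $\Sigma_x = \C^{2S+1}$, the product $\otimes$ to be the tensor product, $\Acal_L$ the (finite-dimensional) algebra of operators on $\Scal = \otimes_{x\in\T_L}\C^{2S+1}$, and $\langle\cdot\rangle$ the Gibbs state $\langle A\rangle = Z_u(\beta)^{-1}\tr(A e^{-\beta H_u})$, which satisfies $\langle\1\rangle = 1$. With the choice $F^1_x = \1_x$ and $F^2_x = S^3_x$, the abstract two-point function $G_L(x,y) = \langle F^2_x F^2_y \prod_{z\neq x,y} F^1_z\rangle$ collapses to $\langle S^3_x S^3_y\rangle$, since all the $F^1_z$ are the identity. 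I would also note that, because $\vartheta$ acts on $\C^{2S+1}$-operators by relabelling tensor factors and $S^3$ is the same matrix at every site, the function $F^2_x = (F^2_o)^{[x]}$ produced by the sequence of edge-reflections in the statement is exactly $S^3_x$ (there is no sign issue here because only the third component appears, in contrast to what happens for $S^1,S^2$); this is where one should be slightly careful.

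Next I would check torus symmetry~\eqref{eq:refinvariance}. The Hamiltonian $H_u = -2\sum_{\{x,y\}\in\Ecal_L}(S^1_xS^1_y + uS^2_xS^2_y + S^3_xS^3_y)$ is invariant under the lattice translations of $\T_L$, hence so is $e^{-\beta H_u}$ and so is the trace; since $F^1$ and $F^2$ are the same operator at every site, translating all the indices by $z\in\T_L$ leaves $\langle\prod_{x\in A}F^1_x\prod_{x\in B}F^2_x\rangle$ unchanged. Thus~\eqref{eq:refinvariance} holds, and in particular~\eqref{eq:refinvariance2} holds. Then I would invoke the known reflection positivity for reflections through edges of the Gibbs state when $u\leq 0$, citing \cite{F-S-S, K-L-S2, U} exactly as the excerpt does; this supplies hypothesis~(1)--(2) of the abstract reflection-positivity definition.

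With all hypotheses of Theorem~\ref{thm:monotonicity} in force (reflection positivity through edges, torus symmetry), the conclusions transfer verbatim. Inequality~\eqref{eq:oddinequality} with $z\cdot\be_i$ odd gives $\langle S^3_oS^3_z\rangle \leq \langle S^3_oS^3_{(z\cdot\be_i)\be_i}\rangle$; inequality~\eqref{eq:eveninequality} with $z\cdot\be_i$ even gives the averaged bound $\langle S^3_oS^3_z\rangle \leq \tfrac12(\langle S^3_oS^3_{(z\cdot\be_i+1)\be_i}\rangle + \langle S^3_oS^3_{(z\cdot\be_i-1)\be_i}\rangle)$; and the monotonicity statement~\eqref{eq:oddmonotonicity} gives that $n\mapsto \langle S^3_oS^3_{y+n\be_i}\rangle + \langle S^3_oS^3_{n\be_i}\rangle$ is non-increasing over odd $n\in(0,L/2)$ for any $y$ with $y\cdot\be_i=0$. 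One small translation-of-conventions remark is needed: Theorem~\ref{cor:quantummonotonicity} is phrased for $z\in\N^d$ (or $z\in\Z^d$ embedded in $\T_L$) with $z\cdot\be_i\in 2\N+1$ or $2\N\setminus\{0\}$, which is just the specialisation of the abstract statement's "$z_i$ odd/even" to the relevant coordinate, using~\eqref{eq:refinvariance2} to centre everything at $o$.

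I do not anticipate a genuine obstacle here — the theorem is designed as a corollary — so the "hard part" is purely bookkeeping: making sure that $(F^2_o)^{[x]} = S^3_x$ under the reflection prescription (no stray signs or complex conjugates, which is why one restricts to the $S^3$ component rather than $S^1$ or $S^2$), that the trace functional really does satisfy $\langle\1\rangle=1$ and is linear on the finite-dimensional operator algebra, and that the cited literature indeed gives reflection positivity through edges for all $u\leq 0$, all $\beta\geq 0$, all $S\in\tfrac12\N$, and all $d\geq 2$. Once these are confirmed, the proof is a one-line appeal to Theorem~\ref{thm:monotonicity}.
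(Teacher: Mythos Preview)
Your proposal is correct and follows exactly the paper's approach: the paper states that Theorem~\ref{cor:quantummonotonicity} is a direct consequence of Theorem~\ref{thm:monotonicity} once one knows reflection positivity for reflections through edges when $u\le 0$ (citing \cite{F-S-S,K-L-S2,U}), and your write-up simply spells out the verification of the abstract hypotheses (choice of $F^1_x=\1_x$, $F^2_x=S^3_x$, torus symmetry from translation invariance of $H_u$, and the identification $(F^2_o)^{[x]}=S^3_x$). There is nothing to add.
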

We now turn our attention to the consequence of Theorem \ref{thm:positivity}. 
It is known from the famous result of Dyson, Lieb and Simon \cite{D-L-S} and various extensions of this result \cite{K-L-S,K-L-S2,U} that for $d\geq 3$ and $S\in\tfrac12 \N$ there are constants $c_1,c_2>0$ such that for $L\in2\N$ sufficiently large
\begin{equation}\label{eq:uniformpositivityquantum}
\frac{1}{|\T_L|}\sum_{x\in\T_L}\langle S^3_oS^3_x\rangle\geq c_1-\frac{c_2}{\beta}.
\end{equation}
Our next theorem extends such a result by showing that the two-point function is \textit{point-wise} uniformly positive on vertices whose coordinates are all odd.

\newpage 

\begin{thm}\label{prop:thm2.8} 
Suppose that $d\geq 3$ and $u\leq 0$.
\begin{enumerate}[(i)]
\item For any $\varphi \in (0, \frac{c_1}{2})$ there exists $\beta$  large enough and $\epsilon > 0$ such that,  for any $L \in  
2 \mathbb{N}$, any odd integer $n\in(-\varepsilon L,\varepsilon L)$ and any $i\in\{1,\dots,d\}$,
\begin{equation}
\langle S_o^3S_{n\be_i}^3\rangle \geq \varphi.
\end{equation}
\item
There exists an explicit $Q(d, u) \in (0 , \infty)$ such that if $S > Q(d, u)$
and $\beta$ is large enough, then there exists $\varphi, \varepsilon > 0 $ such that, for any $L \in 2 \mathbb{N}$ and $y \in \T_L$ such that $\|y \|_{\infty} \leq \varepsilon L$ and, for each $i\in\{1,\dots,d\}$, $y\cdot\be_i\in2\N+1$, 
\begin{equation}\label{eq:quantuniformbound}
\langle S_o^3S_{y}^3\rangle \geq \varphi. 
\end{equation}
\end{enumerate}
\end{thm}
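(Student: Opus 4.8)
The plan is to derive both statements from Theorem \ref{thm:positivity} applied to the quantum set-up of Section \ref{sec:quantumspin}, so the first task is to check its hypotheses for the choice $F^1_x=\1_x$, $F^2_x=S^3_x$. Reflection positivity for reflections through edges holds for $u\le 0$ by \cite{F-S-S,K-L-S2,U}, and torus symmetry \eqref{eq:refinvariance} is immediate from the translation invariance of $H_u$; in particular $G_L(o,x)=\langle S^3_o S^3_x\rangle$ as in Theorem \ref{cor:quantummonotonicity}, and since the couplings in the first and third coordinate directions are ferromagnetic when $u\le 0$, this quantity is nonnegative. For the uniform bound \eqref{eq:corrrequirement} I would use Cauchy--Schwarz for the positive functional $A\mapsto\tfrac1{Z_u(\beta)}\tr(Ae^{-\beta H_u})$, namely
$$
\langle S^3_o S^3_x\rangle^2\ \le\ \langle (S^3_o)^2\rangle\,\langle (S^3_x)^2\rangle\ =\ \langle (S^3_o)^2\rangle^2 ,
$$
together with the fact that $\langle(S^3_o)^2\rangle$ equals $\tfrac13 S(S+1)$ when $u=-1$ (by the $SU(2)$ symmetry obtained after the sublattice rotation) and is at most $\tfrac12 S(S+1)$ when $u\in(-1,0]$ (by the $O(2)$ symmetry of $H_u$ about the second axis, together with $\sum_{i}(S^i_o)^2=S(S+1)\1$); this yields a finite $M=M(S,u)$, and for part (i) even the crude $M=S^2$ is enough. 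Finally \eqref{eq:FSS} follows from \eqref{eq:uniformpositivityquantum}: for $d\ge 3$ and fixed $\beta$, letting $L\to\infty$ gives $\liminf_L\tfrac1{|\T_L|}\sum_x G_L(o,x)\ge c_1-c_2/\beta=:C_1$, which is positive once $\beta$ is large.

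For part (i), given $\varphi\in(0,c_1/2)$ I would fix $\beta$ so large that $C_1=c_1-c_2/\beta>2\varphi$; then $\varphi\in(0,C_1/2)$, and Theorem \ref{thm:positivity}(i) (with $M=S^2$) yields $\varepsilon>0$ such that $\langle S^3_o S^3_{n\be_i}\rangle=G_L(o,n\be_i)\ge\varphi$ for every $L\in 2\N$, every integer $n\in(-\varepsilon L,\varepsilon L)$ and every $i$; in particular for odd $n$, which is the asserted statement.

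For part (ii) I would instead quote Theorem \ref{thm:positivity}(ii), which gives, for every $\varepsilon\in(0,\tfrac12)$ and every sufficiently large $L$, the bound $G_L(o,y)\ge M-(\tfrac14-\tfrac12\varepsilon)^{-d}(M-C_1)$ for all $y$ with every coordinate odd and $\|y\|_\infty<\varepsilon L$. This lower bound does not depend on $L$, so it suffices to make it strictly positive; letting $\varepsilon\downarrow 0$ and $\beta\uparrow\infty$ the requirement becomes $c_1>(1-4^{-d})\,M$. Here the \emph{sharp} choice of $M$ is essential: one takes $M=M(S,u)$ equal to the symmetry value of $\langle(S^3_o)^2\rangle$ above, and then invokes the explicit form of the infrared/Gaussian-domination bound of \cite{D-L-S} and its refinements \cite{K-L-S,K-L-S2,U}, which in fact shows that \eqref{eq:uniformpositivityquantum} holds with $c_1$ of the form $M-C(d,u)\,S$, a correction of lower order in $S$ than $M=\Theta(S^2)$, so that $c_1/M\to 1$ as $S\to\infty$. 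Consequently there is an explicit threshold $Q(d,u)$ with $c_1>(1-4^{-d})M$ whenever $S>Q(d,u)$; fixing such an $S$, then a small $\varepsilon>0$, then $\beta$ large, turns the displayed bound into a positive constant $\varphi$, and Theorem \ref{thm:positivity}(ii) gives the claim.

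I expect the main obstacle to be precisely this last quantitative point: pinning down a uniform (in $L$ and $\beta$) upper bound $M$ for the two-point function that is sharp enough to be comparable, as $S\to\infty$, to the Dyson--Lieb--Simon lower bound on the Ces\`aro sum, and extracting from that bound its explicit dependence on $S$, $d$ and $u$. This is what forces one to track the residual internal symmetry separately at $u=-1$ and for $u\in(-1,0]$, and what produces the explicit constant $Q(d,u)$. Once suitable $M$ and $C_1$ are in place, both parts are a direct application of Theorem \ref{thm:positivity}.
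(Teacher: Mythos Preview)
Your strategy matches the paper's: verify the hypotheses of Theorem~\ref{thm:positivity} for the quantum set-up and read off (i) and (ii) from parts (i) and (ii) of that theorem. For part (ii) the paper is simply more explicit than you are: it sets $M=\tfrac{S(S+1)}{3}$, quotes from \cite{U} the formula
\[
c_1=\frac{S(S+1)}{3}-\frac{1}{\sqrt{2}\,|\T_L|}\sum_{k\neq 0}\sqrt{\varepsilon_u(k)/\varepsilon(k)},
\]
bounds $\varepsilon_u(k)\le\tfrac{S(S+1)}{6}(1-u)\,\varepsilon(k+\boldsymbol\pi)$, and from these obtains the explicit quadratic condition $S^2+S>\tfrac34(1-u)J_{d,L}^2\bigl(\tfrac14-\tfrac12\varepsilon\bigr)^{-2d}$, which defines $Q(d,u)$.

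There is, however, one place where your argument as written would break. Your $O(2)$ symmetry argument yields only $\langle(S^3_o)^2\rangle\le\tfrac{S(S+1)}{2}$ for $-1<u\le 0$, while the infrared formula above has leading term $\tfrac{S(S+1)}{3}$. If you pair $M=\tfrac{S(S+1)}{2}$ with $c_1\approx\tfrac{S(S+1)}{3}-O(S)$ you get $M-c_1=\Theta(S^2)$, and the positivity criterion $M-4^d(M-c_1)>0$ from Theorem~\ref{thm:positivity}(ii) fails for every $S$, no matter how large. So your claim that ``$c_1$ is of the form $M-C(d,u)S$'' is not justified with your $M$ when $-1<u\le 0$: for the argument to go through you need the leading term of $c_1$ to coincide with $M$, i.e.\ you need $M=\tfrac{S(S+1)}{3}$ for all $u\le 0$ (which the paper asserts, deferring to \cite{U}), not merely the weaker bound coming from the $O(2)$ symmetry. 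You rightly identified this matching of $M$ and $c_1$ as the crux, but it is a genuine gap rather than bookkeeping.
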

In particular, $Q(0, 3)$ can be taken equal to $8$ and $Q(-1, 3)$ can be taken equal to 
$11$. If we could find a constant $c>0$ as in Remark \ref{remark} (i) then we could extend \eqref{eq:quantuniformbound} to all vertices $y$ such that $\|y\|_{\infty}\leq\varepsilon L$.
\begin{proof}
The first claim follows from  (\ref{eq:uniformpositivityquantum}),
and from an immediate application of the claim  (i) in Theorem \ref{thm:positivity}. We now prove the claim (ii).  We start from (\ref{eq:uniformpositivityquantum}), we have $M=S(S+1)/3$. From \cite{U} obtain an explicit expression for $c_1$, 
\begin{equation}
c_1=\frac{S(S+1)}{3}-\frac{1}{\sqrt{2}}\frac{1}{|\T_L|}\sum_{k\in \T_L^*\setminus\{o\}}\sqrt{\frac{\varepsilon_u(k)}{\varepsilon(k)}}
\end{equation}
where $\T_L^*$ is the Fourier dual lattice, $\varepsilon(k)=2\sum_{i=1}^d(1-\cos(k_i))$ and $\varepsilon_u(k)=\sum_{i=1}^d\big[(1-u\cos(k_i))\langle S^1_oS^1_{e_i}\rangle + (u-\cos(k_i))\langle S^2_oS^2_{e_i}\rangle\big]$. Now it is easy to check that $\varepsilon_u(k)\leq \tfrac{S(S+1)}{6}(1-u)\varepsilon(k+\boldsymbol{\pi})$, which gives
\begin{equation}\label{eq:quantumc1}
c_1\geq \frac{S(S+1)}{3}-\frac{\sqrt{1-u}}{2}\sqrt{\frac{S(S+1)}{3}}J_{d,L}
\end{equation}
where
\begin{equation}\label{eq:J}
J_{d,L}=\frac{1}{|\T_L|}\sum_{k\in \T_L^*\setminus\{o\}}\sqrt{\frac{\varepsilon(k+\boldsymbol{\pi})}{\varepsilon(k)}}
\end{equation}
satisfies $\lim_{d\to\infty}\lim_{L\to\infty}J_{d,L}=1$. Further $\lim_{L\to\infty}J_{d,L}$ is a decreasing function of $d$ and $\lim_{L\to\infty}J_{3,L}=1.15672\cdots$.
Using these bounds, the inequality (ii) of Theorem \ref{thm:positivity} shows that there is some $\varphi>0$ such that for any $x\in\T_L$ with $|x\cdot\be_i|\in(0,\varepsilon L)\cap 2\N+1$ for every $i\in\{1,\dots,d\}$ we have $\langle S_o^3S_x^3\rangle \geq \varphi$ once $\beta$ is sufficiently large if
\begin{equation}
S^2+S-\tfrac34 (1-u)(J_{d,L})^2\big(\tfrac14-\tfrac12\varepsilon\big)^{-2d}>0,
\end{equation}
which is fulfilled for any large enough $S$. This completes the proof. 
 \end{proof}

\subsection{The Random Path Model}\label{sec:rpmpath}
The Random Path Model (RPM) was introduced in \cite{L-T}.
It can be viewed as a random loop model with an arbitrary number of 
coloured loops and walks, with loops and walks possibly sharing the same edge and, at every vertex, a pairing function which pairs pairs of links touching that vertex or leaving them unpaired.
It was shown in \cite{L-T} that, for different choices of the parameters of the RPM, we can obtain many interesting models such as the loop $O(N)$ model, the spin $O(N)$ model, the dimer and double-dimer model and random lattice permutations. 
Here we introduce the  RPM in a  more general setting than in \cite{L-T}. Such a generalisation consists of allowing pairings 
of links with different colours and allows us to derive site monotonicity properties for a more general class of two-point functions, for example, for the probability that a loop connects two distinct vertices of the torus.


Let $\mathcal{E}_L$ be the set of edges connecting nearest neighbour vertices of the torus.
Let $m=(m_e)_{e\in\Ecal_L}\in\N^{\Ecal_L}$ be an assignment of a number of \emph{links} on each edge of $\Ecal_L$ and, for $N\in N_{>0}$, let $c(m)\in\bigtimes_{e\in\Ecal_L}\big(\{1,\dots,N\}^{m_e}\big)$ be a function, which we call a \emph{colouring}, that for each $e\in\Ecal_L$ assigns the $m_e$ links on $e$ with a colour in $\{1,\dots,N\}$. Lastly we define $\pi(m,c(m))=(\pi_x(m,c(m)))_{x\in\T_L}$  consisting of a collection of partitions of links. $\pi_x(m,c(m))$ is a partition of the links incident to $x$ into sets with at most two links each. If, for some $x\in\T_L$, two links are in the same element of the partition at $x$ then we say the links are \emph{paired at $x$} and call this element a \emph{pairing}. If a link is not paired to any other link at $x$ then we say $x$ is \emph{unpaired at $x$}. Links can be paired or unpaired at both end points of their corresponding edge. We denote by $\Wcal_L$ the set of all such triples $(m,c(m),\pi(m,c(m))$ and refer to elements $w=(m(w),c(w),\pi(w))\in\Wcal_L$  as \emph{configurations}. Configurations can be interpreted as a collection of multicoloured loops and walks on $(\T_L,\Ecal_L)$. 

Now for $x\in\T_L$ and $i\in\{1,\dots,N\}$ let $u^i_x$ be the number of unpaired links of colour $i$ at $x$, let $K_x$ be the number of pairings at $x$ between two differently coloured links, and let $n_x$ be the number of elements of $\pi_x$. If $K_x=0$ we define $v^i_x$ to be the number of pairings at $x$ between links with colour $i$, otherwise we define $v^i_x=0$. Finally let $t_x$ be the number of pairings at $x$ between links on the same edge (this is required to recover, for example, the spin $O(N)$ model from the RPM).

Let $U:\N^{2N+3}\to \R$ and $\beta\geq 0$. We define our measure $\mu_{L,N,\beta,U}$ on $\Wcal_L$ as
\begin{equation}
\mu_{L,N,\beta,U}(w)=\prod_{e\in\Ecal_L}\frac{\beta^{m_e(w)}}{m_e(w)!}\prod_{x\in\T_L}U_x(w)\qquad \forall w\in\Wcal_L
\end{equation}
where $U_x(w)=U(u^1_x,\dots,u^N_x,v^1_x,\dots,v^N_x,K_x,n_x,t_x)$. We refer to $U$ as a vertex \emph{weight function}. For $f:\Wcal_L\to\R$ we use the same notation for the expectation of $f$, $\mu_{L,N,\beta,U}(f):=\sum_{w\in\Wcal_L}f(w)\mu_{L,N,\beta,U}(w)$.

The measure $\mu_{L, N, 
\beta, U  }$ was proven to be reflection positive for reflections through edges in \cite[Proposition 3.2]{L-T}. The same result holds for the more general 
random path model defined in this note, since allowing pairing of links with different colour does not modify the proof.

It can be shown that the random path model fits the general framework introduced in the present note, by considering local state spaces for $x\in\T_L$ that consist of a specification of the number of coloured links on each edge incident to $x$ (an element of $\N^{2dN}$) together with a function that maps $\N^{2dN}$ to partitions of $\sqcup_{m\geq0}\{1,\dots,m\}$. The measure is then supported on configurations whose functions partition the correct value of $m$ (the value corresponding to the total number of incident links) at each $x\in\T_L$ and which, for each $e\in\Ecal_L$ specify the same link numbers on $e$ for both end points of $e$.

Suppose that $U_x(w)=0$ whenever $K_x\neq 0$, then $\mu_{L,N,\beta,U}$ is supported on configurations of monochromatic loops and walks. From this we can recover the RPM introduced in \cite{L-T} which reduces to the specific examples mentioned above if we further specify $U$ in an appropriate way. In this case we could take
\begin{equation}
\langle\cdot\rangle=\frac{1}{Z^{loop}_{L,N,\beta,U}}\mu_{L,N,\beta,U}(\cdot)
\end{equation}
where $Z^{loop}_{L,N,\beta,U}$ is the total measure under $\mu_{L,N,\beta,U}$ of configurations with only loops. We then take
\begin{equation}
F^1_x=\1_{u^1_x=0}\qquad \text{ and } F^2_x=\1_{u^1_x=1}
\end{equation}
and find that $G_L(x,y)$ corresponds to the two-point function introduced in \cite{L-T}, when $U$ is chosen appropriately this is equal to the spin-spin correlation of the spin $O(N)$ model. From this we can recover Theorems 2.4, 2.6 and 2.8 in \cite{L-T} .

Now suppose that $N > 1$, that $U_x$ allows links of different colours to be paired, and that it is $0$ if $\sum_i u^i_x\neq 0$ (meaning the model only has loops and no walks). Our linear functional $\langle\cdot\rangle$ could then be given by 
\begin{equation}
\langle\cdot\rangle=\frac{1}{Z^{mono}_{L,N,\beta,U}}\mu_{L,N,\beta,U}(\cdot)
\end{equation}
where $Z^{mono}_{L,N,\beta,U}$  is the total measure under $\mu_{L,N,\beta,U}$ of configurations with $\sum_xK_x=0$ and only loops.
Now we take 
\begin{equation}
F^1_x=\1_{K_x=0}\quad \text{ and }\quad F^2_x=\1_{K_x=1}.
\end{equation}
We have that $G_L(x,y)=2\binom{N}{2}\P(x\leftrightarrow y)$ where the probability is in the system with only monochromatic loops with colours in $\{1,\dots,N\}$ and there are no walks. 
The event $x\leftrightarrow y$ is the event that there is a loop that passes through $x$ and $y$.

  Theorem \ref{thm:monotonicity} leads then to the following theorem. 
\begin{thm}
Let  $\mathbb{P}(  x \leftrightarrow y  )$ be the probability that 
a loop passes through $x$ and $y$ in the random path model with only monochromatic loops and no open path.
 For any $z=(z_1,\dots,z_d)$,
\begin{align}
\P(o\leftrightarrow z)&\leq \P(o\leftrightarrow z_i\be_i) \qquad \qquad\qquad\qquad\qquad\quad \,\,& \text{ if }  z_i\in2\Z+1,
\\
\P(o\leftrightarrow z)&\leq \tfrac12\P(o\leftrightarrow (z_i-1)\be_i)+\tfrac12 \P(o\leftrightarrow (z_i+1)\be_i) \, & \text{ if } z_i\in 2\Z\setminus \{0\},
\end{align}
and that for $y\in\T_L$ such that $y\cdot\be_i=0$ 
\begin{equation}
\P(o\leftrightarrow y+n\be_i)+\P(o\leftrightarrow n\be_i)
\end{equation}
is a non-increasing function of $n$ for all odd $n\in(0,L/2)$.
\end{thm}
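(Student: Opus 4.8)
The plan is to observe that this final theorem is a direct corollary of Theorem~\ref{thm:monotonicity} once one has verified that the linear functional $\langle \cdot \rangle = \mu_{L,N,\beta,U}(\cdot)/Z^{mono}_{L,N,\beta,U}$, together with the choice $F^1_x = \1_{K_x = 0}$ and $F^2_x = \1_{K_x = 1}$, satisfies the two structural hypotheses of that theorem: reflection positivity for reflections through edges, and torus symmetry of the associated two-point function. The reflection positivity is already granted: it is stated in the text (following \cite[Proposition 3.2]{L-T}, extended to allow pairings of different-coloured links) that $\mu_{L,N,\beta,U}$ is reflection positive for reflections through edges, and normalising by a positive constant $Z^{mono}_{L,N,\beta,U}$ preserves both the symmetry property~(1) and the positivity property~(2) in the definition. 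Torus symmetry~\eqref{eq:refinvariance} holds because the weights $\beta^{m_e}/m_e!$ and $U_x(w)$ depend on $w$ only through quantities ($m_e$, $u^i_x$, $v^i_x$, $K_x$, $n_x$, $t_x$) that are equivariant under the lattice-translation action on $\Wcal_L$, so translating a configuration leaves its weight unchanged and the expectation of any translate of $\prod_{x \in A} F^1_x \prod_{x \in B} F^2_x$ equals the original.

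Next I would record the identity $G_L(x,y) = 2\binom{N}{2}\,\P(x \leftrightarrow y)$, which the text establishes: with $F^1_z = \1_{K_z=0}$ enforcing no multicolour pairings away from $x,y$, and $F^2_x = F^2_y = \1_{K_x = K_y = 1}$ forcing exactly one multicolour pairing at each of $x$ and $y$, the configurations contributing to $G_L(x,y)$ are precisely those consisting of monochromatic loops plus one bichromatic loop through $x$ and $y$; resumming over the $2\binom{N}{2}$ ordered choices of the two colours involved recovers, up to that combinatorial factor, the probability (in the all-monochromatic ensemble) that a single loop visits both $x$ and $y$. Since $2\binom{N}{2} = N(N-1) > 0$ is a fixed positive constant, every inequality and monotonicity statement for $G_L$ transfers verbatim to $\P(\cdot \leftrightarrow \cdot)$.

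With these two points in hand, the proof is simply: apply~\eqref{eq:oddinequality} and~\eqref{eq:eveninequality} of Theorem~\ref{thm:monotonicity} and divide through by $2\binom{N}{2}$ to obtain the two displayed inequalities (for $z_i$ odd and $z_i$ even respectively), and apply the monotonicity statement~\eqref{eq:oddmonotonicity} for $y$ with $y \cdot \be_i = 0$ to conclude that $\P(o \leftrightarrow y + n\be_i) + \P(o \leftrightarrow n\be_i)$ is non-increasing in odd $n \in (0, L/2)$. Note that only reflection positivity through edges is invoked, which is why the statement is restricted to odd $z_i$ in the first inequality and odd $n$ in the monotonicity claim, matching exactly the edge-reflection part of Theorem~\ref{thm:monotonicity}.

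The only genuine content beyond bookkeeping is the verification that the new, more general RPM (with cross-colour pairings permitted) is still reflection positive for edge reflections, but the excerpt explicitly asserts this, noting that "allowing pairing of links with different colour does not modify the proof" of \cite[Proposition 3.2]{L-T}; so I would treat it as given. Consequently I do not expect a serious obstacle here — the main (mild) care-point is making the translation between the abstract functions $F^1, F^2$ and the combinatorial event $\{x \leftrightarrow y\}$ precise, in particular checking that the factor $2\binom{N}{2}$ is exactly right and that no configurations with $\sum_x K_x > 2$ sneak into $G_L(x,y)$ (they cannot, since $\prod_{z \neq x,y}\1_{K_z = 0}$ together with $\1_{K_x = 1}\1_{K_y = 1}$ pins $\sum_x K_x = 2$ and forces the two odd-parity vertices of the bichromatic sub-configuration to be exactly $x$ and $y$, hence a single bichromatic loop through both).
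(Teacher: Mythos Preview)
Your proposal is correct and matches the paper's approach exactly: the paper simply states that ``Theorem~\ref{thm:monotonicity} leads then to the following theorem'' after having established reflection positivity of $\mu_{L,N,\beta,U}$ for edge reflections and the identity $G_L(x,y)=2\binom{N}{2}\,\P(x\leftrightarrow y)$, and you have filled in precisely those verifications (plus the torus symmetry check) before invoking Theorem~\ref{thm:monotonicity} and dividing through by the positive constant $2\binom{N}{2}$.
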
 
 Note that $\mathbb{P}(  x \leftrightarrow y  )$ equals the probability that a  loop connects $x$ and $y$ in the loop O(N) model, in the double dimer model, in lattice permutations or in the loop representation of the spin O(N) model under an appropriate choice of $U$ \cite{L-T}.
Further, it has been proven \cite{B-U} that, when $U$ is chosen appropriately, such a probability equals the following correlation, $\mathbb{P}(  x \leftrightarrow y ) = \langle S_x^1 S_x^2 S_y^1 S_y^2   \rangle$, in the spin O(N) model with $N> 1$, hence our theorem provides monotonicity properties for such a four-spin correlation function.

\section{Proof of Theorem \ref{thm:monotonicity}}\label{sec:proofmonotonicity}

Suppose that $\langle\cdot\rangle$ is reflection positive with respect to the reflection $\vartheta$. Let $Q\subset \T_L$ and define $Q^{\pm}:=(Q\cap \T_L^{\pm})\cup\vartheta(Q\cap\T_L^{\pm})$. The key to the proof is the following lemma.

\begin{lem} \label{lem:keylem}
For $Q\subset\T_L$
\begin{equation}
\sum_{\substack{x,y\in Q\\ x\neq y}}G_L(x,y)\leq \frac12 \sum_{\substack{x,y\in Q^+\\ x\neq y}}G_L(x,y)+\frac12\sum_{\substack{x,y\in Q^-\\ x\neq y}}G_L(x,y).
\end{equation}
\end{lem}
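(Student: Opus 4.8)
The plan is to run a chessboard-type reflection estimate. Write $Q_{\pm}:=Q\cap\T_L^{\pm}$, so that $Q=Q_+\sqcup Q_-$, and recall the elementary fact — immediate from the definition $F^j_x=(F^j_o)^{[x]}$ and its path-independence — that $\vartheta F^j_w=F^j_{\vartheta w}$. Splitting the double sum on the left according to which of $\T_L^{\pm}$ each of $x,y$ lies in, and using that $G_L$ is symmetric in its arguments, I would write
$$\sum_{\substack{x,y\in Q\\ x\neq y}}G_L(x,y)=S_{++}+S_{--}+2\,S_{+-},$$
where $S_{++},S_{--}$ denote $\sum_{x,y\in Q_{\pm},\,x\neq y}G_L(x,y)$ and $S_{+-}:=\sum_{x\in Q_+}\sum_{y\in Q_-}G_L(x,y)$. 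The terms $S_{++},S_{--}$ will be cancelled later against contributions from the right-hand side, so the work lies in bounding the cross term $S_{+-}$.

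For $x\in\T_L^+$ set $A_x:=F^2_x\prod_{z\in\T_L^+\setminus\{x\}}F^1_z\in\Acal_L^+$. If $x\in\T_L^+$ and $y\in\T_L^-$, then splitting the product over $\T_L\setminus\{x,y\}$ into its $\T_L^{\pm}$ parts and using $\vartheta F^j_w=F^j_{\vartheta w}$ shows the $\T_L^-$ part equals $\vartheta A_{\vartheta y}$, so $G_L(x,y)=\langle A_x\,\vartheta A_{\vartheta y}\rangle$. By linearity of $\langle\cdot\rangle$ and $\vartheta$, and reindexing $x=\vartheta y$,
$$S_{+-}=\big\langle\,\mathcal{A}\;\vartheta\mathcal{B}\,\big\rangle,\qquad \mathcal{A}:=\sum_{x\in Q_+}A_x\in\Acal_L^+,\qquad \mathcal{B}:=\sum_{x\in\vartheta Q_-}A_x\in\Acal_L^+.$$
Cauchy--Schwarz \eqref{eq:refpos} and then the arithmetic--geometric mean inequality, both $\langle\mathcal{A}\vartheta\mathcal{A}\rangle,\langle\mathcal{B}\vartheta\mathcal{B}\rangle$ being nonnegative by reflection positivity, give
$$S_{+-}\le\sqrt{\langle\mathcal{A}\vartheta\mathcal{A}\rangle\,\langle\mathcal{B}\vartheta\mathcal{B}\rangle}\le\tfrac12\langle\mathcal{A}\vartheta\mathcal{A}\rangle+\tfrac12\langle\mathcal{B}\vartheta\mathcal{B}\rangle.$$

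Next I would identify these expectations. Expanding and using $\langle A_x\vartheta A_{x'}\rangle=G_L(x,\vartheta x')$ for $x,x'\in\T_L^+$ (valid since $x\neq\vartheta x'$, again by recombining the $F^1$-products) gives $\langle\mathcal{A}\vartheta\mathcal{A}\rangle=\sum_{x\in Q_+}\sum_{w\in\vartheta Q_+}G_L(x,w)$. Since $Q^+=Q_+\sqcup\vartheta Q_+$, splitting $\sum_{a,b\in Q^+,\,a\neq b}G_L(a,b)$ by halves expresses it as $S_{++}$ plus $\sum_{a,b\in\vartheta Q_+,\,a\neq b}G_L(a,b)$ plus $2\langle\mathcal{A}\vartheta\mathcal{A}\rangle$. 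The crucial input is the $\vartheta$-invariance of $G_L$ on each half: for $a,b\in\T_L^+$, writing $G_L(a,b)=\langle P\,\vartheta N\rangle$ with $P:=F^2_aF^2_b\prod_{z\in\T_L^+\setminus\{a,b\}}F^1_z$ and $N:=\prod_{z\in\T_L^+}F^1_z$, property $(1)$ of reflection positivity gives $G_L(a,b)=\langle N\,\vartheta P\rangle=G_L(\vartheta a,\vartheta b)$. Hence the middle sum equals $S_{++}$, so $\langle\mathcal{A}\vartheta\mathcal{A}\rangle=\tfrac12\sum_{a,b\in Q^+,\,a\neq b}G_L(a,b)-S_{++}$, and likewise $\langle\mathcal{B}\vartheta\mathcal{B}\rangle=\tfrac12\sum_{a,b\in Q^-,\,a\neq b}G_L(a,b)-S_{--}$. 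Substituting into the bound for $S_{+-}$ and then into the decomposition of the left-hand side makes $S_{++},S_{--}$ cancel, which is exactly the claim.

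The point that must not be handled naively — and the one I would flag as the main obstacle — is that the cross sum $S_{+-}$ has to be written as the \emph{single} inner product $\langle\mathcal{A}\,\vartheta\mathcal{B}\rangle$ before applying reflection positivity; invoking \eqref{eq:refpos} term by term for each $G_L(x,y)$ introduces spurious factors of $|Q_{\pm}|$ and loses the constant $\tfrac12$. A secondary point requiring care is the reflection covariance $\vartheta F^j_w=F^j_{\vartheta w}$ and the consequent $\vartheta$-invariance of $G_L$, which uses property $(1)$ and not merely its Cauchy--Schwarz consequence $(2)$. For a reflection through vertices the same argument goes through, with the usual minor modifications to accommodate the sites on the reflecting hyperplane.
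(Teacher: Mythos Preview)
Your argument is correct and rests on the same reflection--positivity/Cauchy--Schwarz mechanism as the paper, but it is packaged differently. The paper applies \eqref{eq:refpos} to the \emph{product} observables
\[
A=\prod_{x\in Q\cap\T_L^+}\bigl(1+\eta\,F^2_x\textstyle\prod_{z\in\T_L^+\setminus\{x\}}F^1_z\bigr),\qquad
B=\prod_{x\in Q\cap\T_L^-}\bigl(1+\eta\,F^2_{\vartheta x}\textstyle\prod_{z\in\T_L^-\setminus\{x\}}F^1_{\vartheta z}\bigr),
\]
expands in the small parameter $\eta$, verifies that the order-$\eta$ contributions match on both sides (using torus symmetry \eqref{eq:refinvariance}), and then reads off the lemma from the order-$\eta^2$ coefficients. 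You instead apply \eqref{eq:refpos} directly to the \emph{linear} observable $\mathcal{A}=\sum_{x\in Q_+}A_x$ (effectively the first-order part of the paper's $A$) and follow with AM--GM. The two routes are equivalent in content; your version is a little more direct in that it bypasses the generating-function device and, notably, does not invoke torus symmetry at all --- the only invariance you use is $G_L(a,b)=G_L(\vartheta a,\vartheta b)$, which you obtain from property~(1) of reflection positivity alone. Your remark that for reflections through vertices the disjoint decompositions $Q=Q_+\sqcup Q_-$ and $Q^{\pm}=Q_{\pm}\sqcup\vartheta Q_{\pm}$ may fail (sites on the reflecting hyperplane are fixed) is the right caveat; the paper only applies the lemma to reflections through edges, where this issue does not arise.
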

\begin{proof}
For $0<\eta \ll 1$ we consider the following functions
\begin{equation}
A=\prod_{x\in Q\cap\T_L^+}(1+\eta F^2_x\prod_{z\in\T^+_L\setminus\{x\}}F^1_z),\qquad B=\prod_{x\in Q\cap\T_L^-}(1+\eta F^2_{\vartheta x}\prod_{z\in\T^-_L\setminus\{ x\}}F^1_{\vartheta z}).
\end{equation}
Now for simplicity of notation we write $\T_L(x)$ for $\T_L^+\setminus\{x\}$ if $x\in \T_L^+$ and $\T_L^-\setminus\{x\}$ if $x\in \T_L^-$.
A simple calculation gives
\begin{equation}
\begin{aligned}
\langle A\vartheta B\rangle&=\big\langle\prod_{x\in Q}\big(1+\eta F^2_x\prod_{z\in\T_L(x)}F^1_z\big)\big\rangle
\\
&=1+\eta\sum_{x\in Q}\big\langle F^2_x\prod_{z\in\T_L(x)}F^1_z\big\rangle+\eta^2\sum_{\substack{x,y\in Q\\ x\neq y}}\big\langle F^2_xF^2_y\prod_{z\in\T_L(x)}F^1_z\prod_{z\in\T_L(y)}F^1_z\big\rangle +O(\eta^3),
\end{aligned}
\end{equation}
and analogously
\begin{align}
\langle A\vartheta A\rangle&=1+\eta\sum_{x\in Q^+}\big\langle F^2_x\prod_{z\in\T_L(x)}F^1_z\big\rangle+\eta^2\sum_{\substack{x,y\in Q^+\\ x\neq y}}\big\langle F^2_xF^2_y\prod_{z\in\T_L(x)}F^1_z\prod_{z\in\T_L(y)}F^1_z\big\rangle +O(\eta^3),
\\
\langle B\vartheta B\rangle&=1+\eta\sum_{x\in Q^-}\big\langle F^2_x\prod_{z\in\T_L(x)}F^1_z\big\rangle+\eta^2\sum_{\substack{x,y\in Q^-\\ x\neq y}}\big\langle F^2_xF^2_y\prod_{z\in\T_L(x)}F^1_z\prod_{z\in\T_L(y)}F^1_z\big\rangle +O(\eta^3).
\end{align}
Now suppose that $x,y\in Q\cap\T_L^+$, then $x,y,\vartheta x,\vartheta y\in Q^+$ and we further note that 
\begin{equation}\label{eq:mixedterms}
\big\langle F^2_xF^2_y\prod_{z\in\T_L(x)}F^1_z\prod_{z\in\T_L(y)}F^1_z\big\rangle=\big\langle F^2_{\vartheta x}F^2_{\vartheta y}  \prod_{z\in\T_L(\vartheta x)}F^1_z\prod_{z\in\T_L(\vartheta y)}F^1_z\big\rangle.
\end{equation}
 An analogous identity holds for $x,y\in Q\cap \T_L^-$.
Now we use \eqref{eq:refpos}. Note that the $\eta$ terms will cancel by \eqref{eq:refinvariance}. 
Now we compare the $\eta^2$ terms. The terms $\big\langle F^2_xF^2_y\prod_{z\in\T_L(x)}F^1_z\prod_{z\in\T_L(y)}F^1_z\big\rangle$ when $x,y\in Q\cap\T_L^{\pm}$ will cancel due to \eqref{eq:mixedterms}. By using \eqref{eq:refinvariance} repeatedly on the remaining terms to group those terms that are equal gives the result.
\end{proof}

We take $Q=\{o,z\}$ and $\vartheta$ the reflection in the plane bisecting $\{p\be_i,(p+1)\be_i\}$ for $p:=\tfrac12 (z\cdot \be_i-1+q\}$, this requires $z\cdot\be_i+q\in2\N+1$ and $z\cdot\be_i\pm q\in(0,L)$. If we take $q=0$ when $z_i\in 2\N+1$ and $q=1$ when $z_i\in2\N\setminus\{0\}$ then Lemma \ref{lem:keylem} gives us \eqref{eq:oddinequality} and \eqref{eq:eveninequality}.
If we also have reflection positivity for reflections through sites then we can reflect in the plane $R=\{x\in\R\,:\, x\cdot\be_i=\tfrac12(z\cdot\be_i+q)\}$, requiring that $z\cdot\be_i+q$ is even. If we apply Lemma \ref{lem:keylem} with $q=0$ we find that for $z\cdot\be_i\in2\N\setminus\{0\}$ we also have \eqref{eq:oddinequality}.

For the monotonicity result \eqref{eq:oddmonotonicity} we take $Q=\{o,z,z_i\be_i,z-z_i\be_i\}$ with the same reflection as above. We define the function
\begin{equation}
G^{\be_i}_L(x):=\tfrac12\big(G_L(o,x)+G_L(o,(x\cdot\be_i)\be_i)\big),
\end{equation}
and find, using Lemma \ref{lem:keylem}, after rearranging and  \eqref{eq:refinvariance} that for $z_i+q$ odd
\begin{equation}\label{eq:iterativeinequality}
G^{\be_i}_L(z+q\be_i)-G^{\be_i}_L(z)\geq G^{\be_i}_L(z)+G^{\be_i}_L(z-q\be_i).
\end{equation}
The proof follows the proof of \cite[Proposition 4.2]{L-T}.
We can now prove \eqref{eq:oddmonotonicity} by contradiction. Suppose that $y\in\T_L$ such that $y\cdot\be_i=0$ and odd $n\in(0,L/2)$ satisfy $G^{\be_i}_L(y+n\be_i)>G^{\be_i}_L(y+(n-2)\be_i)$. Now by repeatedly using \eqref{eq:iterativeinequality} with $q=2$ we find 
\begin{equation}
G^{\be_i}_L(y+n\be_i)>G^{\be_i}_L(y+(n-2)\be_i)>G^{\be_i}_L(y+(n-4)\be_i)>G^{\be_i}_L(y+(n-6)\be_i)\dots
\end{equation}
Once we have used this inequality $n$ times we find $G^{\be_i}_L(y+n\be_i)>G^{\be_i}_L(y+n\be_i-2n\be_i)=G^{\be_i}_L(y-n\be_i)$, but by reflection positivity we must have  $G^{\be_i}_L(y-n\be_i)=G^{\be_i}_L(y+n\be_i)$. This contradiction completes the proof of \eqref{eq:oddmonotonicity}. If, in addition, we have reflection positivity for reflections through sites we can use the reflection in $R=\{x\in\R\,:\, x\cdot\be_i=\tfrac12(z\cdot\be_i+q)\}$. We then obtain the inequality \eqref{eq:iterativeinequality} for $z_i+q$ even. Using this we can obtain a contradiction as before by alternating between the odd and even version of \eqref{eq:iterativeinequality} with $q=1$ to find that for any $y\in\T_L$ such that $y\cdot\be_i\pm 1\in(0,L)$
\begin{equation}
G^{\be_i}_L(y+\be_i)-G^{\be_i}_L(y)\geq G^{\be_i}_L(y)-G^{\be_i}_L(y-\be_i).
\end{equation}
The full monotonicity result then follows similarly to \eqref{eq:oddmonotonicity}.

\section{Proof of Theorem \ref{thm:positivity}}\label{sec:proofpositivity}
We start with the proof of (i) and we present the proof of (ii) and (iii) afterwards.
To begin, fix an arbitrary $\varphi \in (0, C_1)$. We claim that there must exist an $\epsilon > 0$ small enough such that for any $L \in 2 \mathbb{N}$ there exists $z_L \in  \T_L \setminus [0, \epsilon L]^d $ such that  $G_L(o,x) \geq \varphi$. 
The proof of this claim is by contradiction. Suppose that this was not the case, then, under the assumptions of the theorem, we would have that 
$$
\sum_{x \in \T_L} G_L(o,x) \leq \, \,  \varphi \, \, \lceil  (\, 1 \, - \, \epsilon  \, ) \, L \rceil ^d \, + \, M  \lceil \epsilon L \rceil^d,
$$
which would be in contradiction with (\ref{eq:FSS}) for small enough $\epsilon$, since we assumed that $\varphi< C_1$.
Now define  $y_L : = z_L \cdot \boldsymbol{e}_1$ and, if it is  odd,  we use the first claim in Theorem \ref{thm:monotonicity} and  deduce that, 
$
G_L \big (o, y_L \boldsymbol{e}_1 \big ) \, \, \geq \, \, \varphi,
$
otherwise we use the second claim in Theorem \ref{thm:monotonicity} and deduce that, 
$
\max \big \{G_L \big (o, (y_L + 1) \boldsymbol{e}_1  \big ),   G_L \big (o, (y_L - 1) \boldsymbol{e}_1  \big )  \big \} \, \, \geq \, \, \frac{\varphi}{2}.
$
Using the fact that $y_L+ 1 \geq \epsilon L$ and the last claim in Theorem \ref{thm:monotonicity}, we deduce that, for any odd integer in the interval $n \in (o, \epsilon L)$,
$
G_L \big (o, n  \boldsymbol{e}_1  \big ) \geq \frac{\varphi}{2}.
$
This concludes the proof of (i). 
We now proceed with the proof of (ii) and (iii).
To begin, for $z\in\T_L$ we define
\begin{equation}
\Q_z:=\{(x_1,\dots,x_d)\in\Z^d\,:\, \forall i\in\{1,\dots,d\},\, x_i\leq |z\cdot\be_i| \text{ or } x_i>L-|z\cdot\be_i]\}.
\end{equation}
The proof relies on the following lemmas.

\begin{lem}\label{lem:keylem2}
Let $z\in\T_L$ and $y\in\mathbb{Q}_z$ be such that $z_i$ and $y_i$ are odd for every $i\in\{1,\dots,d\}$ then under the same assumptions as Theorem \ref{thm:positivity}
\begin{equation}
G_L(o,y)\geq 2^dG_L(o,z) -(2^d-1)M.
\end{equation}
If, in addition, $\langle\cdot\rangle$ is reflection positive for reflections through vertices then the inequality holds for any $z\in\T_L$ and $y\in\Q_z$.
\end{lem}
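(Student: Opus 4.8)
The plan is to prove Lemma~\ref{lem:keylem2} by induction on the number of coordinates in which $z$ (equivalently $y$) fails to lie on a cartesian axis through the origin, reducing the general case to iterated applications of the one-coordinate inequality \eqref{eq:oddinequality}. First I would set up the induction: write $z = (z_1,\dots,z_d)$ and let $k$ be the number of indices $i$ with $z_i \neq 0$. The base case $k \leq 1$ is essentially vacuous (or follows directly from $G_L(o,z)\leq M$). For the inductive step, pick an index $i$ with $z_i \neq 0$; since $z_i$ is odd we may apply \eqref{eq:oddinequality} with the reflection through edges in the $i$-th direction to get
\begin{equation*}
G_L(o,z) \leq G_L(o, z - z_i\boldsymbol{e}_i + \text{(stuff)}),
\end{equation*}
but more carefully: \eqref{eq:oddinequality} in the form applicable here says that projecting one coordinate of $z$ onto the axis can only increase $G_L$, i.e.\ $G_L(o,z) \leq G_L(o, z^{(i)})$ where $z^{(i)}$ agrees with $z$ except that we have to be slightly careful about whether we are collapsing $z_i$ to $0$ or collapsing the \emph{other} coordinates. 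The clean statement to use is: if $z_i$ is odd then $G_L(o,z) \leq G_L(o, z_i\boldsymbol{e}_i)$ only involves one coordinate; the intermediate statement I actually need is the ``one step'' version, namely that replacing the $d$-dimensional displacement by one with the $i$-th coordinate zeroed out costs at most a factor, which is where the $2^d$ and $M$ enter.

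The cleaner route, and the one I would follow, is the telescoping identity used to prove \eqref{eq:oddinequality} itself via Lemma~\ref{lem:keylem}: apply Lemma~\ref{lem:keylem} with $Q = \{o, z\}$ and a reflection $\vartheta$ through an edge-plane perpendicular to $\boldsymbol{e}_i$ chosen so that $\vartheta$ sends $z$ to $z_i\boldsymbol{e}_i$-type reflected point, obtaining
\begin{equation*}
G_L(o,z) \leq \tfrac12 G_L(o, z') + \tfrac12 G_L(o, z'')
\end{equation*}
where $z', z''$ each have strictly fewer nonzero coordinates than $z$ in a suitable sense — actually $z'$ is the point with $i$-th coordinate collapsed and $z''$ has the $i$-th coordinate reflected across the whole axis. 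Then bound $G_L(o,z'') \leq M$ by \eqref{eq:corrrequirement} and $G_L(o,z')$ by the inductive hypothesis, which after one step gives something like $G_L(o,z) \leq \tfrac12 \cdot (\text{ind. hyp. bound}) + \tfrac12 M$; unwinding the recursion $a_k = \tfrac12 a_{k-1} + \tfrac12 M$ with $a_0 = G_L(o,y)$ produces precisely $G_L(o,z) \leq 2^{-k}G_L(o,y) + (1 - 2^{-k})M$, i.e.\ $G_L(o,y) \geq 2^k G_L(o,z) - (2^k - 1)M$, and $k \leq d$ with the worst case $k=d$ giving the stated inequality. For the reflection-through-vertices addendum, the only change is that \eqref{eq:oddinequality} then holds for $z_i$ even as well, so the parity restriction on $z,y$ can be dropped entirely and the same induction runs verbatim for arbitrary $z \in \T_L$, $y \in \Q_z$.

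The one subtlety I need to handle carefully is \emph{which} point plays the role of $y$ at the end of the induction and the compatibility condition $y \in \Q_z$: the set $\Q_z$ is exactly the ``box'' of points each of whose coordinates is dominated (in torus distance) by the corresponding coordinate of $z$, so at each reflection step the collapsed point $z'$ still lies in $\Q_z$ and still dominates the eventual target $y$; I would verify that the reflection through the edge-plane between $p\boldsymbol{e}_i$ and $(p+1)\boldsymbol{e}_i$, with $p$ chosen as in Section~\ref{sec:proofmonotonicity}, indeed moves $z$ to a point whose $i$-th coordinate matches $y_i$ (using that both $z_i$ and $y_i$ are odd, so $z_i - y_i$ is even and a suitable edge-reflection realizes the shift), and that the ``reflected-away'' point $z''$ has $G_L(o,z'')\leq M$ with no further constraint needed.

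The main obstacle I expect is \textbf{bookkeeping the geometry of the reflections on the torus}: making sure that at each of the $d$ steps the chosen reflection plane exists (the constraints $z\cdot\boldsymbol{e}_i \pm q \in (0,L)$ from Section~\ref{sec:proofmonotonicity}, which is why $y \in \Q_z$ with $\|\cdot\|_\infty$ small matters) and that the sequence of collapsed points stays inside $\Q_z$ so that the inductive hypothesis applies and so that the final point really is the given $y$ rather than some other point in $\Q_z$. The arithmetic of the recursion $a_k = \tfrac12 a_{k-1} + \tfrac12 M$ is routine; the parity case split for the through-vertices version is routine given Theorem~\ref{thm:monotonicity}; it is the torus-distance/existence-of-reflection-plane verification that requires care, and I would isolate it as an explicit check that for $y \in \Q_z$ with both $z_i, y_i$ odd there is an admissible edge-reflection in direction $i$ collapsing $z_i$ towards $y_i$ while keeping all intermediate points in $\Q_z$.
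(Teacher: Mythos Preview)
Your overall recursion is correct --- the paper also processes one coordinate at a time and obtains exactly $G_L(o,z^{i+1}_0)\geq 2G_L(o,z^i_0)-M$ at each step, which unwinds to $2^d G_L(o,z)-(2^d-1)M$. But the mechanism you propose for each step does not work, for two concrete reasons.

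First, your induction variable is ill-posed for this lemma: both $z$ and $y$ have \emph{all} coordinates odd, hence all nonzero, so ``number of indices with $z_i\neq 0$'' is always $d$ and there is nothing to induct on. What you really want is to change the $i$-th coordinate of $z$ from $z_i$ to $y_i$, one $i$ at a time.

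Second, and more seriously, a single edge reflection cannot realise that change. With $Q=\{o,z\}$ and an edge-plane perpendicular to $\be_i$ at height $p+\tfrac12$, Lemma~\ref{lem:keylem} gives
\[
G_L(o,z)\;\leq\;\tfrac12\,G_L(o,\vartheta o)+\tfrac12\,G_L(z,\vartheta z)
\;=\;\tfrac12\,G_L\big(o,(2p{+}1)\be_i\big)+\tfrac12\,G_L\big(o,(2p{+}1{-}2z_i)\be_i\big),
\]
so both terms live on the $\be_i$-axis; you never produce a point agreeing with $z$ off the $i$-th coordinate and equal to $y_i$ there. Even if you try to arrange $\vartheta z$ (or $\vartheta o$) to have $i$-th coordinate $y_i$, you need $2p+1=z_i+y_i$ or $2p+1=z_i-y_i$, both of which are even when $z_i,y_i$ are odd --- a parity obstruction. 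So the sentence ``a suitable edge-reflection realizes the shift'' is false.

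The missing ingredient is the monotonicity statement \eqref{eq:oddmonotonicity}, which you never invoke. The paper's one-coordinate step is
\[
2\,G_L(o,z^i_0)\;\leq\; G_L(o,z^i_0)+G_L\big(o,(z^i_0\cdot\be_i)\be_i\big)
\;\leq\; G_L(o,z^i_{D_i/2})+G_L\big(o,(z^i_{D_i/2}\cdot\be_i)\be_i\big),
\]
where the first inequality is \eqref{eq:oddinequality} and the second is \eqref{eq:oddmonotonicity} applied to slide the $i$-th coordinate from $z_i$ down to $y_i$ (in steps of $2$, staying odd). Bounding the last axis term by $M$ then gives $G_L(o,z^i_{D_i/2})\geq 2G_L(o,z^i_0)-M$. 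In the through-vertices case the slide can be done in steps of $1$, which is why the parity restriction disappears. Replace your ``single reflection'' step by this combination of \eqref{eq:oddinequality} and \eqref{eq:oddmonotonicity} and your iteration goes through.
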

\begin{proof}
The proof is as in the proof of \cite[Proposition 4.7]{L-T} with minor changes as we only have the monotonicity result \eqref{eq:oddmonotonicity} for odd $n$. For convenience we assume that $z_i,y_i>0$ for every $i\in\{1,\dots,d\}$, other cases follow by symmetry. For $i\in\{1,\dots, d\}$ define
\begin{equation}
D_i:=(z-y)\cdot \be_i,
\end{equation}
then $D_i\in 2\N$. There is a ``path"
\begin{equation}
(z^1_0,z^1_1,\dots,z^1_{D_1/2},z^2_0,z^2_1,\dots,z^2_{D_2/2},\dots,z^d_0,z^d_1,\dots,z^d_{D_d/2})
\end{equation}
with the properties that $z^1_0=z$, $z^d_{D_d/2}=y$, and, for every  $i\in\{1,\dots,d-1\}$, $z^i_{D_i/2}=z^{i+1}_1$. Further, for each $i\in\{1,\dots,d\}$ and $j\in [1,D_i/2]$
\begin{equation}
z^i_{j-1}-z^i_j=2\be_i.
\end{equation}
Now we use both \eqref{eq:oddinequality} and \eqref{eq:oddmonotonicity},
\begin{equation}
\begin{aligned}
2G_L(o,z^i_0) &\leq G_L(o,z^i_0) +G_L(o,(z^i_0\cdot\be_i)\be_i)
\\
 &\leq G_L(o,z^i_{D_i/2}) + G_L(o,(z^i_{D_i/2}\cdot\be_i)\be_i),
\end{aligned}
\end{equation}
and hence using that $G_L(o,x)\leq M$ for any $x\in\T_L$  we have that
\begin{equation}
G_L(o,z^i_{D_i/2}) \geq 2G_L(o,z^i_0)-M.
\end{equation}
Iterating this for $i=1,\dots, d$ gives
\begin{equation}
\begin{aligned}
G_L(o,y)=G_L(o,z^d_{D_d/2})&\geq 2G_L(o,z^d_0)-M\geq \dots
\\
& \geq  2^dG_L(o,z)-(2^d-1)M,
\end{aligned}
\end{equation}
this completes the proof. If $\langle\cdot\rangle$ is also reflection positive for reflections through vertices the proof is exactly as in \cite[Proposition 4.7]{L-T}. We define $D_i$'s and the path $(z_0^1,\dots z^d_{D_d/2}$ as before except that we can take $z^i_{j-1}-z^i_j=\be_i$, the rest of the proof then proceeds as before.
\end{proof}
Now, for $r\in \N$ let
\begin{equation}
\mathbb{S}_{r,L}:=\{z\in\T_L\, :\, \exists i\in\{1,\dots,d\}\text{ such that } z\cdot \be_i<r\text{ or }L-z\cdot\be_i\leq r\}.
\end{equation}

\begin{lem}\label{lem:keylem3}
Under the same assumptions as \ref{thm:positivity} there are $x_L\in \T_L\setminus\mathbb{S}_{\varepsilon L,L}$ and  $z_L\in\T_L\setminus\mathbb{S}_{\varepsilon L,L}$ with $|z_L\cdot\be_i|\in 2\N+1$ for every $i\in\{1,\dots,d\}$ such that
\begin{align}
G_L(o,x_L)&\geq M-(1-2\varepsilon)^{-d}(M-C_1), \label{eq:xL}
\\
G_L(o,z_L)&\geq M-\big(\tfrac12-\varepsilon\big)^{-d}(M-C_1) \label{eq:zL}.
\end{align}
\end{lem}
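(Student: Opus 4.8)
The plan is to prove both inequalities by a pigeonhole argument that uses only the hypotheses \eqref{eq:FSS} and \eqref{eq:corrrequirement}; reflection positivity and Theorem~\ref{thm:monotonicity} play no role here, entering only later, when Lemma~\ref{lem:keylem3} is combined with Lemma~\ref{lem:keylem2} to prove parts (ii)--(iii) of Theorem~\ref{thm:positivity}. By \eqref{eq:corrrequirement} each quantity $M-G_L(o,x)$ is non-negative, while \eqref{eq:FSS} forces its average over $\T_L$ to be at most, essentially, $M-C_1$; hence any sufficiently dense subset of $\T_L$ must contain a vertex at which $M-G_L(o,\cdot)$ does not exceed $M-C_1$ inflated by the reciprocal density of that subset.

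Concretely, I would first record that, by \eqref{eq:FSS} and \eqref{eq:corrrequirement}, for every $\delta>0$ and all large even $L$,
\[
\sum_{x\in\T_L}\bigl(M-G_L(o,x)\bigr)\ \le\ (M-C_1+\delta)\,|\T_L|,
\]
with all summands non-negative. For \eqref{eq:xL}, set $B_L:=\T_L\setminus\mathbb{S}_{\varepsilon L,L}$; each coordinate of a vertex of $B_L$ runs over $L-2\varepsilon L+O(1)$ consecutive integers, so $|B_L|=(L-2\varepsilon L+O(1))^d$ and $|\T_L|/|B_L|\to(1-2\varepsilon)^{-d}$ as $L\to\infty$. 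If $M-G_L(o,x)>(M-C_1+\delta)|\T_L|/|B_L|$ held for every $x\in B_L$, summing over $B_L$ would contradict the displayed inequality; hence some $x_L\in B_L$ satisfies $M-G_L(o,x_L)\le(M-C_1+\delta)|\T_L|/|B_L|$, which is \eqref{eq:xL}. For \eqref{eq:zL} I would rerun the identical argument with $B_L$ replaced by the subset $B_L'\subset B_L$ of vertices all of whose coordinates are odd. Since $L$ is even, $z\cdot\be_i$ and $L-z\cdot\be_i$ have the same parity, so $z\in B_L'$ is precisely the requirement that $|z\cdot\be_i|\in 2\N+1$ for every $i$ (positivity being automatic since $z\notin\mathbb{S}_{\varepsilon L,L}$); each coordinate interval of length $L-2\varepsilon L+O(1)$ contains $\tfrac12(L-2\varepsilon L)+O(1)$ odd integers, so $|\T_L|/|B_L'|\to(\tfrac12-\varepsilon)^{-d}$, and the same pigeonhole yields $z_L\in B_L'$ satisfying \eqref{eq:zL}.

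The argument is elementary, so the main obstacle is only bookkeeping. Because \eqref{eq:FSS} is a $\liminf$, the displayed inequality carries the spurious $\delta$ and holds only for $L$ large, and the cardinalities $|B_L|,|B_L'|$ involve floor/ceiling corrections from $\varepsilon L\notin\N$ and from the parity constraint; so what the pigeonhole literally delivers is $G_L(o,x_L)\ge M-(1-2\varepsilon)^{-d}(M-C_1)+o(1)$, and likewise $G_L(o,z_L)\ge M-(\tfrac12-\varepsilon)^{-d}(M-C_1)+o(1)$, as $L\to\infty$. Since Lemma~\ref{lem:keylem3} is used only for large $L$ with $\varepsilon$ still a free parameter, this $o(1)$ is harmless --- it can be absorbed, for instance, by shrinking $\varepsilon$ slightly before applying Lemma~\ref{lem:keylem2} --- so I expect no genuine difficulty beyond phrasing the conclusion with due care.
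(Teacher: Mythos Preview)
Your proposal is correct and is essentially the same pigeonhole/contradiction argument the paper uses: assume the desired bound fails on the target set $B_L$ (resp.\ $B_L'$), combine this with the worst-case bound $G_L\le M$ on the complement, and contradict \eqref{eq:FSS}. Your reformulation in terms of the non-negative deficiencies $M-G_L(o,x)$ is just a repackaging of the same computation, and your remarks about the $o(1)$ slack from the $\liminf$ and from $\varepsilon L\notin\N$ are more careful than the paper itself, which simply writes $|\T_L\setminus\mathbb{S}_{r,L}|=(L-2r)^d$ and suppresses the corrections.
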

\begin{proof}
The proof of \eqref{eq:xL} is exactly as in \cite[Lemma 4.9]{L-T}. The proof of \eqref{eq:zL} is a simple adaptation of \cite[Lemma 4.9]{L-T} and we sketch it here. 
Now a simple proof by contradiction shows that there must be a $z_L$ as in the statement of the lemma. Indeed, suppose for every $z_L\in\T_L$ with $|z_L\cdot\be_i|\in [\varepsilon L,L)\cap2\N+1$ for every $i\in\{1,\dots,d\}$ that $G_L(o,z_L)< M-\big(\tfrac12-\varepsilon\big)^{-d}(M-C_1)$. Using this together with the worst-case bound $M$ for every other vertex and the bound $|\T_L\setminus \mathbb{S}_{r,L}|=(L-2r)^d$ gives a contradiction.
\end{proof}

Statement (i) of Theorem \ref{thm:positivity} follows immediately from \eqref{eq:xL} and Theorem \ref{thm:monotonicity}. For statement (ii) of Theorem \ref{thm:positivity} note that if $z_L$ is as in the statement of Lemma \ref{lem:keylem3} then, by Lemma \ref{lem:keylem2}, for any $y\in\mathbb{Q}_{z_L}$ such that $y_i$ is odd for each $i\in\{1,\dots,d\}$ we have (after rearranging)
\begin{equation}
G_L(o,y)\geq 2^dG_L(o,z_L)-(2^d-1)M\geq M-2^d\big(\tfrac12-\varepsilon\big)^{-d}(M-C_1).
\end{equation}
which is equal to the bound in the Theorem.
Finally for statement (iii) of Theorem \ref{thm:positivity} we note that by Lemmas \ref{lem:keylem2} and \ref{lem:keylem3} for any $y\in\Q_{x_L}$ we have (after rearranging)
\begin{equation}
G_L(o,y)\geq 2^dG_L(o,x_L)-(2^d-1)M\geq M-2^d(1-2\varepsilon)^{-d}(M-C_1).
\end{equation}

 \nocite{*}

\end{document}